\newcommand{\vir}[1]{``#1''}
\renewcommand{\phi}{\varphi}
\begin{document}

\title{Randomising Realisability\thanks{The authors would like to thank Rosalie Iemhoff and Jaap van Oosten for discussions about the material included in this paper.}}
%
%
\author{Merlin Carl\inst{1} \and Lorenzo Galeotti\inst{2} \and Robert Passmann\inst{3,4}}
\institute{Europa-Universit\"at Flensburg, 24943 Flensburg, Germany \and Amsterdam University College, Postbus 94160, 1090 GD Amsterdam, The Netherlands \and Institute for Logic, Language and Computation, Faculty of Science, University of Amsterdam, P.O. Box 94242, 1090 GE Amsterdam, The Netherlands \and St John's College, University of Cambridge, Cambridge CB2 1TP, England}
%
%

%
\maketitle              
\begin{abstract}
We consider a randomised version of Kleene's realisability interpretation of intuitionistic arithmetic in which computability is replaced with randomised computability with positive probability. In particular, we show that (i) the set of randomly realisable statements is closed under intuitionistic first-order logic, but (ii) different from the set of realisable statements, that (iii) "realisability with probability 1" is the same as realisability and (iv) that the axioms of bounded Heyting's arithmetic are randomly realisable, but some instances of the full induction scheme fail to be randomly realisable.
\end{abstract}
\section{Introduction}
Have you met skeptical Steve? 
Being even more skeptical than most mathematicians, he only believes what he actually sees. To convince him that there is an $x$ such that $A$, you have to give him an example, together with evidence that $A$ holds for that example. To convince him that $A \rightarrow B$, you have to show him a \emph{method} for turning evidence of $A$ into evidence of $B$, and so on. Given that Steve is \vir{a man provided with paper, pencil, and rubber, and subject to strict discipline} \cite{IntelligentM1}, we can read \vir{method} as \vir{Turing program}, which leads us to Kleene's realisability interpretation of intuitionistic logic \cite{Kleene1945}.

Steve has a younger brother, pragmatical Per. Like Steve, Per is equipped with paper and pencil; however, he also has a coin on his desk, which he is allowed to throw from time to time while performing computations. 
By his pragmatical nature, he does not require being successful at obtaining evidence for a given proposition $A$ every time he gives it a try; he is quite happy when it works with probability $(1-\frac{1}{10^{100}})$ or so, which makes it highly unlikely to ever fail in his lifetime. 

Per wonders whether his pragmatism is more powerful than Steve's method. After all, he knows about Sacks's theorem \cite[Corollary 8.12.2]{downey2010algorithmic} that every function $f:\omega\rightarrow\omega$ that is computable using coin throws with positive probability is recursive. Can he find evidence for some claims where Steve fails? He also notices that turning such \vir{probabilistic evidence} for $A$ into \vir{probabilistic evidence} for $B$ is a job considerably different (and potentially harder) than turning evidence for $A$ into evidence for $B$. Could it be that there are propositions whose truth Steve can see, but Per cannot? Although Per is skeptical, e.g., of the law of the excluded middle just like Steve, he is quite fond of the deduction rules of intuitionistic logic; thus, he wonders whether the set of statements for which he can obtain his \vir{highly probably evidence} is closed under these.

Steve is unhappy with his brother's sloppiness. After all, even probability $(1-\frac{1}{10^{100}})$ leaves a small, albeit nonzero, chance of getting things wrong. He might consider changing his mind if that chance was brought down to $0$ by strengthening Steve's definition, demanding that the \vir{probabilistic evidence} works with probability $1$. However, he is only willing to give up absolute security if that leads to evidence for more statements. Thus, he asks whether \vir{probability $1$ evidence} is the same as \vir{evidence}.

These and other questions will be considered in this paper. To begin with, we will model Per's attitude formally, which gives us the concepts of \emph{$\mu$-realisability} and \emph{almost sure realisability}. We will then show the following:  There are statements that are $\mu$-realisable, but not realisable (Theorem \ref{Lemma:MurealRealREC}). The set of $\mu$-realisable statements are closed under deduction in intuitionistic predicate calculus (Theorem \ref{Theorem:Soundness}); in a certain sense to be specified below, the law or excluded middle fails for $\mu$-realisability (Lemma \ref{Lemma:LEM}). The axioms of Heyting arithmetic except for the induction schema are $\mu$-realised (Theorem \ref{Theorem Heyting Aritmetic}); and there are instances of the induction schema that are not $\mu$-realised (Theorem \ref{Theo:IndFail}). Almost sure realisability is the same as realisability (Theorem \ref{Theorem: F-realisability and realisability are the same}).

\section{Preliminaries}

Realisability is one of the most common semantic tools for the study of constructive theories and was introduced by Kleene in his seminal 1945 paper \cite{Kleene1945}. In this work, Kleene connected intuitionistic arithmetic---nowadays called \emph{Heyting arithmetic}---and recursive functions. The essential idea is that a statement is true if and only if there is a recursive function witnessing its truth. For more details on realisability, see also Troelstra's 344 \cite{Troelstra344}, and van Oosten's paper \cite{vanOosten2002} for an excellent historical survey of realisability. In particular, see \cite[Definition 3.2.2]{Troelstra344} for a definition of realisability in terms of recursive functions. In what follows, we denote this classical relation of realisability by `$\Vdash$'. 

As mentioned in the introduction, we want to give pragmatic Per the ability to throw coins while he tries to prove the truth of a statement. We will implement this coin throwing by allowing Per to access an infinite binary sequence. Therefore, we will make use of the Lebesgue measure on Cantor space $2^\omega$. For a full definition, see Kanamori's section on `Measure and Category' \cite[Chapter 0]{kanamori}. We denote the Lebesgue measure by $\mu$. Recall that a set $A$ is Lebesgue measurable if and only if there is a Borel set $B$ such that the symmetric difference of $A$ and $B$ is null. Given an element $u$ of Cantor space we will denote by $\mathrm{N}_{u\upharpoonright n}$ the basic clopen set $\{v\in 2^\omega \,;\, u\upharpoonright n \subset v\}$ where as usual $u\upharpoonright n$ is the prefix of $u$ of length $n$, and $u\upharpoonright n \subset v$ if $u\upharpoonright n$ is a prefix of $v$. We recall that given a binary sequence of length $n$, we have that $\mathrm{N}_{s}$ is measurable and  $\mu(\mathrm{N}_{s})=\frac{1}{2^{n}}$.

We fix a computable enumeration $(p_n)_{n\in \mathbb{N}}$ of programs. Moreover, given a program $p$ that uses an oracle and an element $u\in 2^\omega$ we will denote by $p^u$ the program $p$ where the oracle tape contains $u$ at the beginning of the computation. Moreover, given $n\in \mathbb{N}$ we will denote by $p(n)$ the program that for every oracle $u\in 2^\omega$ returns $p^u(n)$.  

A sentence in the language of arithmetic is said to be $\Delta_0$ if it does not contain unbounded quantifiers. We will say that a sentence is a \emph{pretty $\Sigma_1$} if it is $\Delta_0$ or of the form $Q_0Q_1\ldots Q_n\psi$ where $\psi$ is $\Delta_0$ and $Q_i$ is either an existential quantifier or a bounded universal quantifier for every $0\leq i\leq n$. Similarly, we will say that a sentence is a \emph{universal $\Pi_1$} if it is $\Delta_0$ or of the form $Q_0Q_1\ldots Q_n\psi$ where $\psi$ is $\Delta_0$ and $Q_i$ is  a universal quantifier for every $0\leq i\leq n$


Throughout this paper, we fix codings for formulas and programs. In order to simplify notation, we will use $\phi$ to refer to both the formula and its code, and similar for programs $p$.
We end this section with some lemmas on realisability of pretty $\Sigma_1$ and universal $\Pi_1$ formulas.

\begin{lemma}\label{Lem:CompClassicalTruth}
There is a program $p$ that for every pretty $\Sigma_1$ sentence $\varphi$ does the following:
If $\varphi$ is true then $p(\varphi)$ halts and outputs a realiser of $\varphi$ and otherwise, it diverges. 
\end{lemma}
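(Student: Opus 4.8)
The plan is to build the program $p$ by effective recursion on the (code of the) sentence $\varphi$: it peels off the leading quantifier block from the outside in and eventually bottoms out at a $\Delta_0$ kernel, calling itself on proper subformulas. That such a self-referential definition is legitimate follows from Kleene's recursion theorem (or simply from the fact that the recursion is along the well-founded subformula relation); correctness is then established by induction on the build-up of the pretty $\Sigma_1$ sentence $\varphi$, simultaneously with the analogous statement for the formula $\theta(\cdot)$ obtained after stripping a leading quantifier.

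First I would settle the $\Delta_0$ base case. Since closed terms evaluate to numerals computably, truth of an atomic $\Delta_0$ sentence is decidable, and hence, by a recursion that evaluates propositional connectives by their truth tables and the bounded quantifiers $\exists x<t$, $\forall x<t$ by inspecting the finitely many instances below $\mathrm{val}(t)$, truth of an arbitrary $\Delta_0$ sentence is decidable. Assuming $\varphi$ is a true $\Delta_0$ sentence, a realiser is produced by a further recursion that uses this decider: a true atomic sentence gets the canonical realiser; for $\psi_0\wedge\psi_1$ one pairs realisers of the (true) conjuncts; for $\psi_0\vee\psi_1$ one uses the decider to see which disjunct holds and tags a realiser of it; for $\neg\psi$ (so $\psi$ is false) any index works since nothing realises $\psi$; for $\exists x<t\,\psi$ one searches below $\mathrm{val}(t)$ for a witness $i$ and pairs it with a realiser of $\psi(i)$; and for $\forall x<t\,\psi$ one computes realisers $r_0,\dots,r_{k-1}$ of $\psi(0),\dots,\psi(k-1)$ with $k=\mathrm{val}(t)$ and then hands back, via the $s$-$m$-$n$ theorem, an index of the program that on input $i$ looks up $r_i$. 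If $\varphi$ is a false $\Delta_0$ sentence, the decider detects this and $p(\varphi)$ enters an infinite loop.

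Next I would handle the leading quantifiers. Write $\varphi=Q_0\theta$ with $\theta$ again pretty $\Sigma_1$, and suppose (inductive hypothesis) that $p$ already works for $\theta(\cdot)$. If $Q_0$ is $\exists x$, then $p(\varphi)$ dovetails the computations $p(\theta(0)),p(\theta(1)),\dots$; as soon as the first of them, say $p(\theta(k))$, halts with output $r$, it returns $\langle k,r\rangle$; if $\varphi$ is false then all these computations diverge, so $p(\varphi)$ diverges, as required. If $Q_0$ is a bounded universal $\forall x<t$, then $p(\varphi)$ computes $k=\mathrm{val}(t)$ and runs $p(\theta(0)),\dots,p(\theta(k-1))$; if all of them halt with realisers $r_0,\dots,r_{k-1}$, it returns (again by $s$-$m$-$n$) an index of the table-lookup program $i\mapsto r_i$, which realises $\forall x<t\,\theta(x)$; and if some $\theta(i)$ with $i<k$ is false, then $p(\theta(i))$ diverges by hypothesis, hence $p(\varphi)$ diverges.

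The routine bookkeeping — fixing pairing functions, matching the exact clauses of the realisability definition of \cite[Definition 3.2.2]{Troelstra344} (in particular whether bounded quantifiers are taken as primitives or unfolded as $\exists x(x<t\wedge\cdots)$ and $\forall x(x<t\to\cdots)$, which only affects how realisers are packaged), and checking that the recursion on formula codes is effective — I would leave implicit. The one genuinely delicate point is the bounded universal case: unlike the existential cases, which are pure dovetailing, it forces us to assemble finitely many realisers into a single \emph{index} of a function and to do this uniformly in $\varphi$, which is precisely where the $s$-$m$-$n$ theorem enters; everything else is a straightforward structural induction.
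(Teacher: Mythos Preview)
Your proposal is correct and follows essentially the same approach as the paper's proof: first handle the $\Delta_0$ kernel by a structural recursion that exploits decidability of $\Delta_0$ truth, then extend through the pretty $\Sigma_1$ quantifier prefix by an outer recursion (unbounded search/dovetailing for $\exists$, finite assembly for bounded $\forall$). The paper's presentation differs only in minor packaging---it phrases the existential step as an ``unbounded search for a true instance'' rather than dovetailing the recursive calls, and it treats the implication case $\psi_0\to\psi_1$ explicitly in the $\Delta_0$ recursion (decide $\psi_0$; if false return any index, if true return a constant program outputting a realiser of $\psi_1$), whereas you only spell out the special case $\neg\psi$. That omission is easily repaired along the same lines and does not affect the argument.
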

\begin{proof}
First we define the program for $\Delta_0$ formulas by recursion.

\bigskip

(1) If $\varphi$ is atomic, $p$ first checks if $\varphi$ is true. If so then $p$ returns any natural number, otherwise, it loops.

\bigskip

(2) $\varphi\equiv \psi_0\land \psi_1$: the program $p$  checks whether $\psi_0$ and $\psi_1$ are true. If both computations are successful then $p$ returns the code of a program $q$ which returns $p(\psi_0)$ on input $0$ and $p(\psi_1)$ on input $1$.

\bigskip

(3) $\varphi\equiv \psi_0\lor \psi_1$ the program $p$ starts checking if at least one between $\psi_0$ and $\psi_1$ is true. If one of the two computations is successful then $p$ returns the code of a program $q$ which returns $p(\psi_i)$ on input $1$ and $i$ on input $0$ where $i$ is the smallest $i$ such that $\psi_i$ is true. Otherwise the program loops. 

\bigskip

(4) $\varphi\equiv \psi_0\rightarrow \psi_1$ then $p$ first checks whether $\psi$ is true if not $p$ returns $0$ otherwise returns a program that for every input returns $p(\psi_1)$.  

\bigskip

(5) $\varphi\equiv \exists{x<n}\psi$ then the program $p$ checks if there is $m<n$ such that $\psi(m)$ is true. If so then $p$ returns the code of a program $q$ which returns $p(\psi(m))$ on input $0$ and $m$ on input $1$ where $m$ is the smallest natural number such that $\psi(m)$ is true. Otherwise the program loops. 

\bigskip

(6) $\varphi\equiv \forall{x<n}\psi$ then $p$ checks in parallel the truth of all the instances of $\psi(m)$ for $m<n$. If all of them are true then $p$ returns the code of a program $q$ which for all $m\in \mathbb{N}$ returns $p(\psi(m))$. Otherwise the program loops.

\bigskip
Now we extend the definition of $p$ to pretty $\Sigma_1$ sentences. Assume that $\varphi$ is of the form $Q_1\ldots Q_n\psi$ where $\psi$ is $\Delta_0$ and $Q_i$ is either an existential quantifier or a bounded universal quantifier for every $0\leq i\leq n$. We define $p$ by recursion on $n$. Since the base case and the inductive step are essentially the same we will only show the latter. 

Let $n=m+1$ and $\varphi\equiv Q_0Q_1\ldots Q_n\psi$ where $\psi$ is $\Delta_0$. We assume that $f$ is already defined for $Q_1\ldots Q_n\psi$ and need to show that we can extend it to $\varphi$. We have two cases

\bigskip

(1) $Q_0$ is a bounded quantifier. Then we repeat what we did in part (5) and (6) of this proof. 

\bigskip

(2) $Q_0$ is an unbounded existential quantifier. Then the program $p$ starts an unbounded search to find an $i$ such that $\psi(i)$ is true. If it finds it then $p$ returns the code of a program $q$ which returns $p(\psi(i))$ on input $0$ and $i$ on input $1$ where $i$ is the smallest natural number such that $\psi(i)$ is true.
\end{proof}

\begin{lemma}\label{Lem:CompClassicalTruth2}
There is a program $p$ that for every universal $\Pi_1$ sentence $\varphi$ does the following:
If $\varphi$ is true then $p(\varphi)$ halts and outputs a realiser of $\varphi$ (we do not specify a behaviour otherwise).
\end{lemma}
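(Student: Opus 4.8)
The plan is to reduce the case of universal $\Pi_1$ sentences to the $\Delta_0$ case, where we can reuse the program from part (1)--(6) of the proof of Lemma~\ref{Lem:CompClassicalTruth}. Recall that a realiser of a $\Delta_0$ formula need only witness the recursive structure of the Boolean connectives and bounded quantifiers, and by Lemma~\ref{Lem:CompClassicalTruth} there is already a program computing such a realiser (and diverging when the formula is false). So the only genuinely new ingredient is handling leading \emph{unbounded} universal quantifiers $\forall x\, \psi(x)$.

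First I would recall the relevant realisability clause: $e \Vdash \forall x\, \psi(x)$ iff for every $n \in \mathbb{N}$, $e(n)$ halts and $e(n) \Vdash \psi(n)$. Thus, given a universal $\Pi_1$ sentence $\varphi \equiv \forall x_0 \cdots \forall x_{n-1}\, \psi$ with $\psi$ being $\Delta_0$, a realiser is simply a program that, on input $\langle m_0, \dots, m_{n-1} \rangle$ (or curried, on input $m_0$ returns a program expecting $m_1$, etc.), runs the $\Delta_0$-realiser from Lemma~\ref{Lem:CompClassicalTruth} on the instance $\psi(m_0, \dots, m_{n-1})$. The key point is that this program can be produced \emph{uniformly and unconditionally}: we do not need to check the truth of $\varphi$ before outputting it, because the s-m-n theorem lets us write down, from the code of $\varphi$ alone, the code of the program $q$ that behaves as described. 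Hence on input $\varphi$, the program $p$ simply returns this code $q$ and halts — no search is performed at the top level, which is why (unlike in Lemma~\ref{Lem:CompClassicalTruth}) we need not and cannot specify the behaviour when $\varphi$ is false.

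Next I would verify correctness: if $\varphi$ is true, then every instance $\psi(\vec m)$ is a true $\Delta_0$ sentence, so by Lemma~\ref{Lem:CompClassicalTruth} the $\Delta_0$-realiser halts on $\psi(\vec m)$ and outputs a realiser of it; therefore $q$ halts on every input and outputs the appropriate realiser, so $q \Vdash \varphi$, and indeed $p(\varphi) = q$ is a realiser of $\varphi$ as required. (If $\varphi$ is false, $q$ may diverge on the offending instance, but $p(\varphi)$ itself still halts — we have made no claim about this case.) The induction on the number $n$ of leading universal quantifiers is trivial, since curried quantification just nests the construction; as in the previous lemma the base case and inductive step coincide.

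The main obstacle — such as it is — is simply being careful about the asymmetry with Lemma~\ref{Lem:CompClassicalTruth}: here the program $p$ must \emph{always} halt on a universal $\Pi_1$ input and produce a candidate realiser by s-m-n, pushing all the "search" into the behaviour of that realiser on its inputs, rather than performing any search itself. There is a minor subtlety in that realisability of $\forall$ is a $\Pi_2$-style condition (the realiser must halt on all inputs), so the output program $q$ is a \emph{total-when-$\varphi$-is-true} program but we never need to decide totality; we only need that $q$, as a fixed program, witnesses $\varphi$ whenever $\varphi$ happens to be true. No uniformity across different $\varphi$ beyond the effectivity of s-m-n is needed, so the construction goes through without difficulty.
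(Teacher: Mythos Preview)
Your proposal is correct and follows essentially the same approach as the paper: reuse the $\Delta_0$ program from Lemma~\ref{Lem:CompClassicalTruth}, and for each leading universal quantifier output (via the s-m-n theorem) the code of a program that on input $n$ recursively computes a realiser of $\psi(n)$. The paper's proof is simply the terse recursive version of your construction, and your additional remarks about $p(\varphi)$ halting unconditionally and deferring all search to the output program make explicit what the paper leaves implicit.
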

\begin{proof}
Define $p$ as in the proof of Lemma \ref{Lem:CompClassicalTruth} for $\Delta_0$ formulas. Then for formulas of the type $\forall{x}\psi$ let $p(\psi)$ be the code of the program that for all $n$ runs $p(\psi(n))$. 
\end{proof}

\begin{lemma}\label{Lem:Sigma1Real}
A pretty $\Sigma_1$ sentence in the language of arithmetic is realised if and only if it is true. The same result holds for universal $\Pi_1$ sentences. 
\end{lemma}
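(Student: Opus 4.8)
The plan is to deduce this lemma almost immediately from the two preceding lemmas, together with the basic soundness of Kleene realisability with respect to truth. The statement has two directions. For the direction from truth to realisability, I would simply invoke Lemma \ref{Lem:CompClassicalTruth}: if $\varphi$ is a true pretty $\Sigma_1$ sentence, then the universal program $p$ of that lemma halts on input $\varphi$ and outputs a realiser of $\varphi$, so in particular $\varphi$ is realised. Likewise, for a true universal $\Pi_1$ sentence, Lemma \ref{Lem:CompClassicalTruth2} produces a realiser. (Strictly speaking one should note that $\Delta_0$ sentences are both pretty $\Sigma_1$ and universal $\Pi_1$, so both halves of the statement cover that base case consistently.)

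For the converse direction, from realisability to truth, I would argue by a routine induction on the structure of the formula, using the standard fact that Kleene realisability is sound for truth on the relevant syntactic fragments. Concretely: if $e$ realises a true-or-false atomic $\Delta_0$ sentence, then by definition of realisability for atomic formulas that sentence is true; realisers of conjunctions, disjunctions, bounded quantifiers and (the relevant) unbounded quantifiers decode, via the clauses of the definition of $\Vdash$, into realisers of the components, from which truth of the whole follows by the induction hypothesis. The point worth isolating is that for pretty $\Sigma_1$ formulas every connective and quantifier that appears (atomic, $\land$, $\lor$, bounded $\forall$, bounded $\exists$, unbounded $\exists$) has the feature that a realiser furnishes enough data to conclude truth; and dually for universal $\Pi_1$ formulas, where a realiser of $\forall x\,\psi(x)$ is (the code of) a total function producing, for each $n$, a realiser of $\psi(n)$, so by the induction hypothesis each $\psi(n)$ is true and hence $\forall x\,\psi(x)$ is true. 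The restriction to these two fragments is exactly what makes the induction go through: it is the implication clause of realisability that blocks a general "realised implies true" statement, and implications (beyond the harmless bounded-quantifier-free case folded into $\Delta_0$) do not occur at the top level of either fragment.

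I do not expect a genuine obstacle here; the only mild subtlety is bookkeeping about which syntactic shapes are permitted, in particular making sure that the $\Delta_0$ base case is handled uniformly and that the unbounded $\exists$ in the pretty $\Sigma_1$ case and the unbounded $\forall$ in the universal $\Pi_1$ case are each compatible with the induction. Since Lemmas \ref{Lem:CompClassicalTruth} and \ref{Lem:CompClassicalTruth2} already supply the harder (effectivity) direction, the proof of this lemma should be short, essentially "one direction by the previous lemmas, the other by induction on formulas using soundness of $\Vdash$."
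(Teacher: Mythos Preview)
Your proposal is correct and follows essentially the same approach as the paper: the paper's proof invokes Lemma~\ref{Lem:CompClassicalTruth} for the true-to-realised direction and dismisses the converse as ``a straightforward induction on the complexity of $\phi$'' (with the universal $\Pi_1$ case handled analogously). Your write-up simply spells out that induction in more detail, including the observation that the implication clause is harmless inside $\Delta_0$ because both directions of the equivalence are available for the subformulas.
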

\begin{proof}
The right-to-left direction follows from Lemma \ref{Lem:CompClassicalTruth}. The other direction is a straightforward induction on the complexity of $\phi$. The proof for universal $\Pi_1$ sentences is also an easy induction.
\end{proof}

\begin{corollary}\label{Cor:CompClassicalRealisability}
There is a program $p$ that for every pretty $\Sigma_1$ sentence $\varphi$ does the following:
If $\varphi$ is true then $p(\varphi)$ halts and outputs a realiser of $\varphi$ and otherwise, it diverges.
\end{corollary}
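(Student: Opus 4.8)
The plan is to observe that this corollary is already essentially contained in the three preceding results, so its proof is a short assembly rather than a fresh argument. By Lemma~\ref{Lem:CompClassicalTruth} there is a program $p$ which, for every pretty $\Sigma_1$ sentence $\varphi$, halts on input $\varphi$ and produces some output when $\varphi$ is true and diverges otherwise. Thus the only point that still needs checking is that this output is genuinely a \emph{realiser} of $\varphi$ with respect to `$\Vdash$', and not merely a witness to classical truth.

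For this I would appeal to the right-to-left direction of Lemma~\ref{Lem:Sigma1Real}, together with a direct inspection of the recursive definition of $p$ given in the proof of Lemma~\ref{Lem:CompClassicalTruth}. In each clause, $p$ returns exactly the code of the program required by the matching clause in the definition of realisability: for $\psi_0\wedge\psi_1$ a program returning a realiser of $\psi_0$ on input $0$ and a realiser of $\psi_1$ on input $1$; for $\psi_0\vee\psi_1$ a program returning the chosen disjunct $i$ on input $0$ and a realiser of $\psi_i$ on input $1$; for $\exists x\,\psi$ (bounded or not) a program returning a witness $m$ together with a realiser of $\psi(m)$; and similarly for the bounded universal and the implication cases. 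Reading the construction off in this way shows that $p(\varphi)$ in fact outputs an element realising $\varphi$ whenever $\varphi$ is true, so the corollary follows by inspection of the earlier proof.

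There is really no obstacle to overcome here; the only care required is to confirm that the inductively built codes satisfy the clauses of Kleene realisability (Definition~3.2.2 of \cite{Troelstra344}), which is exactly the content of Lemma~\ref{Lem:Sigma1Real}. Hence the statement is immediate from Lemmas~\ref{Lem:CompClassicalTruth} and~\ref{Lem:Sigma1Real}.
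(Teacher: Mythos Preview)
Your proposal is correct and matches the paper's approach: the paper gives no explicit proof for this corollary, since the statement is verbatim identical to Lemma~\ref{Lem:CompClassicalTruth}, and the program constructed there already outputs a realiser in each clause (as confirmed by Lemma~\ref{Lem:Sigma1Real}). Your argument is slightly more elaborate than necessary---you could simply note that the corollary restates Lemma~\ref{Lem:CompClassicalTruth}---but the reasoning is sound.
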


\section{Random Realisability}

In this section we will introduce the notion of $\mu$-realisability and prove the basic properties of this relation. As we mentioned before, we will modify classical realisability in order to use realisers that can access an element of Cantor space. Then we will say that a sentence is randomly realised if for non-null set of oracles in Cantor space the program does realise the sentence. Formally we define $\mu$-realisability as follows:

\begin{definition}[$\mu$-Realisability]\label{Def:MuReal}
We define two relations $\Vdash_\mathrm{O}$ and $\Vdash_{\mu}$ by mutual recursion. Let $u\in 2^\omega$, $p$ be a program that uses an oracle, and $\varphi$ be a sentence in the language of arithmetic. We define:
\begin{enumerate}
        \item $(p,u), \not\Vdash_O \bot$,
        \item $(p,u) \Vdash_\mathrm{O} n = m$ iff $n=m$,
        \item $(p,u) \Vdash_\mathrm{O} \phi \land \psi$ iff $(p^u(0),u) \Vdash_\mathrm{O} \phi $ and $(p^u(1),u) \Vdash_\mathrm{O} \psi$, 
         \item $(p,u) \Vdash_\mathrm{O} \phi \lor \psi$ iff we have $p^u(0)=0$ and $(p^u(1),u)\Vdash_{\mathrm{O}} \phi$ or $p^u(0)=1$ and $(p^u(1),u) \Vdash_{\mathrm{O}} \psi$, 
          \item $(p,u) \Vdash_\mathrm{O} \phi \rightarrow \psi$ iff  for all $s$ such that $s\Vdash_{\mu} \phi$, we have that $p^u(s) \Vdash_{\mu}\psi$,  
        \item $(p,u) \Vdash_\mathrm{O} \exists \mathbf{x} \phi$ iff $(p^u(0),u) \Vdash_\mathrm{O} \phi(p^u(1))$ ,
        \item $(p,u) \Vdash_\mathrm{O} \forall \mathbf{x} \phi$ iff for all $n\in \omega$ we have $(p^u(n),u) \Vdash_\mathrm{O} \phi(n)$, 
\end{enumerate}
For every program $p$ that uses an oracle and every sentence $\varphi$ in the language of arithmetic, we will denote by $C_{p,\varphi}$ the set:
$\{u\in 2^\omega\,;\, (p,u)\Vdash_{\mathrm{O}}\varphi\}.$
Let $\varphi$ be a sentence in the language of arithmetic, $r$ be a positive real number, and $p$ be a natural number. We define $p\Vdash_{\mu} \varphi\geq r$ as follows:
$p\Vdash_{\mu}\varphi \geq r\text{ iff } \mu(C_{p,\varphi})\geq r.$

In this case we will say that $p$ \emph{randomly realises (or $\mu$-realises) $\varphi$ with probability at least $r$}. We will say that $\phi$ is \emph{randomly realisable (or $\mu$-realisable) with probability at least $r$} if and only if there is $p$ such that $p\Vdash_{\mu}\varphi \geq r$. Moreover, we write $p\Vdash_{\mu}\varphi$ and say that $p$ \emph{randomly realises} (or $\mu$-realises) $\varphi$ if and only if $p\Vdash_{\mu}\varphi\geq r$ for some $r>0$. Finally, we will say that $\phi$ is \emph{randomly realisable (or $\mu$-realisable)} if $\sup\{\mu(C_{p,\varphi})\,;\, p\Vdash_{\mu} \varphi\}=1$. 
\end{definition}


Why is it not possible to give a simpler definition of $\Vdash_\mu$? A natural attempt would be the following:
$p\Vdash_\mu \varphi\geq r \Leftrightarrow \mu(\{ u\,;\, (p,u)\Vdash_{\mathrm{Or}} \varphi\})\geq r$
where $\Vdash_{\mathrm{Or}}$ denotes oracle-realisability, obtained by replacing computability with computability relative to a fixed oracle in Kleene realisability. 
Unfortunately, 
it turns out that this relation is not closed under modus ponens and the $\forall$-$\mathrm{GEN}$ rule of predicate logic. Another natural approach is the one of \S\,\ref{Sec:BigReal}.
We start our study of $\mu$-realisability by showing that the set of $\mu$-realised sentences of arithmetic is consistent.

\begin{lemma}
Let $\varphi$ a sentence in the language of arithmetic. Then $\varphi$ is $\mu$-realised iff $\lnot \varphi$ is not $\mu$-realised.
\end{lemma}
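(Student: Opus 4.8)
The plan is to unwind the definition of $\mu$-realisability for $\lnot\varphi$ (which I read as $\varphi \to \bot$) and observe that it collapses into a sharp dichotomy. First I would record the trivial but crucial fact that $\bot$ is never $\mu$-realised: clause (1) of Definition~\ref{Def:MuReal} gives $(p,u) \not\Vdash_{\mathrm{O}} \bot$ for all $p$ and $u$, so $C_{p,\bot} = \emptyset$ and $\mu(C_{p,\bot}) = 0$ for every program $p$, whence no natural number $\mu$-realises $\bot$.

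Next I would compute $C_{p,\lnot\varphi}$. By clause (5), $(p,u) \Vdash_{\mathrm{O}} \varphi \to \bot$ holds iff $p^u(s) \Vdash_\mu \bot$ for every $s$ with $s \Vdash_\mu \varphi$. Since the conclusion of this implication is always false by the first step, the condition is equivalent to there being no $s$ with $s \Vdash_\mu \varphi$, i.e.\ to $\varphi$ not being $\mu$-realised. The point to notice is that this criterion does not mention $p$ or $u$ at all: hence $C_{p,\lnot\varphi} = 2^\omega$ whenever $\varphi$ is not $\mu$-realised, and $C_{p,\lnot\varphi} = \emptyset$ whenever $\varphi$ is $\mu$-realised, for every program $p$.

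Finally I would read off both directions from the measures. If $\varphi$ is $\mu$-realised, then $\mu(C_{p,\lnot\varphi}) = \mu(\emptyset) = 0$ for all $p$, so $\lnot\varphi$ is not $\mu$-realised. If $\varphi$ is not $\mu$-realised, then $\mu(C_{p,\lnot\varphi}) = \mu(2^\omega) = 1$ for all $p$, so any natural number $p$ witnesses $p \Vdash_\mu \lnot\varphi \geq 1$, and hence $\lnot\varphi$ is $\mu$-realised (in fact $\mu$-realisable, since the relevant supremum is $1$). Combining the two implications gives the biconditional.

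I do not expect a genuine obstacle; the only step requiring care is the middle one — recognising that, because $\bot$ has no $\mu$-realiser, the truth value of $(p,u) \Vdash_{\mathrm{O}} \varphi \to \bot$ depends on neither $p$ nor $u$, so that $C_{p,\lnot\varphi}$ is forced to be either null or conull with nothing in between.
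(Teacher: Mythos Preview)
Your proposal is correct and proceeds by the same unwinding of the definition that the paper uses. In fact your argument is more complete: the paper's proof only treats the forward direction explicitly (assuming both $\varphi$ and $\lnot\varphi$ are $\mu$-realised and deriving a contradiction from $q^{u}(p)\Vdash_{\mu}\bot$), leaving the converse tacit, whereas your computation that $C_{p,\lnot\varphi}$ is either $\emptyset$ or $2^{\omega}$ --- depending only on whether $\varphi$ has a $\mu$-realiser --- dispatches both directions at once.
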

\begin{proof}
Assume that both $p\Vdash_{\mu}\varphi$ and $q\Vdash_{\mu}\lnot \varphi$. Then for all $u\in C_{q,\lnot\phi}$ we have that $q^u(p)$ would be a realiser of $\bot$. But this is a contradiction.  
\end{proof}

The following lemma has a crucial role in the theory of $\mu$-realisability.

\begin{lemma}[Push Up Lemma]\label{Lemma:PushUP}
Let $\varphi$ be a sentence in the language of first order arithmetic and $0<r \leq r'<1$ be positive real numbers. Then $\varphi$ is randomly realisable with probability at least $r$ if and only if $\varphi$ is randomly realisable with probability at least $r'$.
\end{lemma}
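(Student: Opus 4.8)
The plan is to amplify the success probability by combining the Lebesgue density theorem with a syntactic ``oracle--shift'' operation on programs. One direction is trivial: since $r \le r'$, any program $p$ with $\mu(C_{p,\varphi}) \ge r'$ in particular has $\mu(C_{p,\varphi}) \ge r$. So fix a program $p$ with $\mu(C_{p,\varphi}) \ge r > 0$; we must produce $q$ with $\mu(C_{q,\varphi}) \ge r'$.

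The first ingredient is a \emph{shifting lemma}: for every finite binary string $s$ and every sentence $\varphi$ there is a computable operation $e \mapsto T_{s,\varphi}(e)$ on program codes such that, for all oracles $u$,
\[
(T_{s,\varphi}(e),\,u) \Vdash_{\mathrm{O}} \varphi \quad\Longleftrightarrow\quad (e,\,s ^\frown u) \Vdash_{\mathrm{O}} \varphi.
\]
This is proved by recursion on $\varphi$, where the numeral instances $\phi(n)$ count as less complex than $\exists\mathbf{x}\,\phi$ and $\forall\mathbf{x}\,\phi$. The program $T_{s,\varphi}(e)$ simulates $e$ with the oracle $s ^\frown u$ in place of $u$; the delicate part is how it treats the numbers $e$ outputs. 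At each position where Definition~\ref{Def:MuReal} feeds an output back into the \emph{oracle}--realisability relation $\Vdash_{\mathrm{O}}$ --- both outputs in the $\land$-clause, the realiser $p^u(1)$ (but not the disjunct selector $p^u(0)\in\{0,1\}$) in the $\lor$-clause, the realiser $p^u(0)$ (but not the witness $p^u(1)$) in the $\exists$-clause, every output in the $\forall$-clause --- the new program recursively applies the appropriate $T_{s,\psi}$ for the relevant subformula $\psi$; at positions producing plain data (the disjunct selector, the existential witness) it copies the output verbatim; and in the $\rightarrow$-clause it also copies the output $p^u(a)$ verbatim, which is legitimate precisely because there both hypothesis ($a\Vdash_\mu\phi$) and conclusion ($p^u(a)\Vdash_\mu\psi$) speak only of $\mu$-realisability, and $\mu$-realisability of a number is insensitive to prepending $s$ to anything. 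Verifying each clause is then a mechanical unfolding using the inductive hypothesis. I expect this bookkeeping --- getting the realiser/data split right, and noticing that $T_{s,\varphi}$ must be allowed to depend on $\varphi$ rather than being one uniform program --- to be the main obstacle; everything else is routine.

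With the shifting lemma in hand the rest is short. Since $\mu(C_{p,\varphi}) \ge r > 0$ and each $C_{p,\varphi}$ is Borel (a straightforward induction on $\varphi$, using in the $\rightarrow$-clause that ``$a\Vdash_\mu\phi$'' and ``$\mu(C_{e,\psi})>0$'' do not depend on the current oracle), the Lebesgue density theorem (for $2^\omega$) supplies a density point of $C_{p,\varphi}$, hence a finite binary string $s$ with
\[
\frac{\mu(C_{p,\varphi} \cap \mathrm{N}_s)}{\mu(\mathrm{N}_s)} > r',
\]
where we use $r' < 1$. Set $q := T_{s,\varphi}(p)$. By the shifting lemma $C_{q,\varphi} = \{u \in 2^\omega : s ^\frown u \in C_{p,\varphi}\}$, and since the map $u \mapsto s ^\frown u$ carries $\mu$ to the conditional measure $\mu(\,\cdot \mid \mathrm{N}_s)$, we obtain $\mu(C_{q,\varphi}) = \mu(C_{p,\varphi}\cap\mathrm{N}_s)/\mu(\mathrm{N}_s) > r'$, so $q \Vdash_{\mu} \varphi \ge r'$ and $\varphi$ is randomly realisable with probability at least $r'$. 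The hypothesis $r' < 1$ enters exactly once --- the density theorem only yields conditional densities strictly below $1$ --- which is also why no analogous amplification to probability $1$ is available.
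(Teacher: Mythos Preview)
Your proposal is correct and follows the same strategy as the paper—apply the Lebesgue density theorem to $C_{p,\varphi}$, pick a basic clopen $\mathrm{N}_s$ with conditional density exceeding $r'$, and replace $p$ by a program that runs with the oracle prefixed by $s$. Your treatment is in fact more careful than the paper's: where the paper simply lets $p'$ run $p$ on the shifted oracle and declares the conclusion ``trivial by definition,'' you correctly observe that the recursive clauses of $\Vdash_{\mathrm{O}}$ feed the \emph{outputs} of $p$ back into $\Vdash_{\mathrm{O}}$ with the \emph{unshifted} oracle, so a formula-indexed shifting operator $T_{s,\varphi}$ is needed, and you supply the inductive argument (including the key point that the $\rightarrow$-clause needs no recursive shift because it refers only to $\Vdash_\mu$).
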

\begin{proof}
The right-to-left direction is trivial. For the left-to-right direction, 
let $\varphi$ be randomly realisable with probability at least $r$. We will show that  $\varphi$ is randomly  with probability at least $r'$. Let $p$ be a program such that $\mu(C_{p,\varphi})\geq r>0$. By the Lebesgue Density Theorem \cite[Exercise 17.9]{Kechris} there are $u\in 2^\omega$ and $n\in \omega$ such that $\frac{\mu(C_{p,\varphi} \cap \mathrm{N}_{u\upharpoonright n})}{\mu(\mathrm{N}_{u\upharpoonright n})}>r'$. Now, let $p'$ be the program that given an oracle runs $p$ with oracle $(u\upharpoonright n)\circ u$. Note that $\mu(C_{p',\varphi})=\frac{\mu(C_{p,\varphi} \cap \mathrm{N}_{u\upharpoonright n})}{\mu(\mathrm{N}_{u\upharpoonright n})}>r'$. Finally, it follows trivially by the definition that $p'$ randomly realisable with probability at least $r'$ as desired.
\end{proof}

By the Push Up Lemma, we can simplify our definition of $\mu$-realisability. 

\begin{corollary}\label{Lem:Mu-real2Def}
A sentence $\varphi$ in the language of arithmetic is $\mu$-realisable if and only if there are $0<r\in \mathbb{R}$ and $p$ such that that $\mu$-realises $\varphi$ with probability at least $r$.  
\end{corollary}

We conclude this section by proving some basic interactions between $\mu$-realisability and the logical operators. 

\begin{lemma}\label{Lem:MasurabilityOfOracleSets}
For all programs $p$ and sentences $\varphi$ the set $C_{p,\varphi}$ is Borel. In particular $C_{p,\varphi}$ is measurable.
\end{lemma}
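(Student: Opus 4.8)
The plan is to prove by induction on the structure of the sentence $\varphi$ that $C_{p,\varphi}$ is Borel for every program $p$, actually strengthening the statement to track where in the Borel hierarchy each set lands so that the induction goes through cleanly. The base cases are immediate: for $\varphi \equiv \bot$ the set $C_{p,\varphi}$ is empty, and for $\varphi \equiv (n=m)$ the set $C_{p,\varphi}$ is either all of $2^\omega$ or empty depending on whether $n=m$ holds, so both are clopen.

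For the inductive steps I would exploit that the condition \vir{$(p,u) \Vdash_{\mathrm O} \varphi$} is, in each clause of Definition \ref{Def:MuReal}, a Boolean combination (possibly countably infinite) of conditions of the form \vir{$p^u(k)$ halts with output $j$} together with membership conditions \vir{$u \in C_{q,\psi}$} for subformulas $\psi$ and programs $q$ definable from $p$. The key observation is that for fixed $p$ and $k$, the map $u \mapsto p^u(k)$ (when defined) is continuous on its domain, and the domain \vir{$p^u(k){\downarrow}$} is open; hence each set $\{u : p^u(k) = j\}$ is the intersection of an open set with a closed set, in particular Borel (indeed $\boldsymbol{\Sigma}^0_2$). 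Concretely: for conjunction, $C_{p,\varphi\wedge\psi} = C_{p^{(\cdot)}(0),\phi} \cap C_{p^{(\cdot)}(1),\psi}$, which is really a countable union over the possible values of $p^u(0)$ and $p^u(1)$ of sets of the form $\{u : p^u(0)=a\} \cap \{u : p^u(1)=b\} \cap C_{a,\phi} \cap C_{b,\psi}$, each Borel by the inductive hypothesis. Disjunction and the existential and universal quantifier clauses are handled the same way — the universal quantifier gives a countable intersection over $n \in \omega$, and the existential one a countable union over the value of $p^u(1)$ — and Borel sets are closed under countable unions and intersections.

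The one clause that needs care is implication, clause (5): $(p,u)\Vdash_{\mathrm O}\phi\to\psi$ iff for every $s$ with $s \Vdash_\mu \phi$ we have $p^u(s)\Vdash_\mu\psi$. Here $s$ ranges over natural numbers (codes of programs), so \vir{for all $s$ with $s\Vdash_\mu\phi$} is a countable — in fact trivial to enumerate — conjunction indexed by $\omega$; and \vir{$p^u(s)\Vdash_\mu\psi$} unwinds, via the definition of $\Vdash_\mu$, to the statement that $\mu(C_{p^u(s),\psi}) > 0$. The subtlety is that the program $p^u(s)$ itself depends on the oracle $u$, so this is not literally one of the sets covered by the induction hypothesis. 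I would resolve this exactly as in the other clauses: split according to the value $t = p^u(s)$, writing the condition as $\bigcap_{s\in\omega}\bigcup_{t\in\omega}\bigl(\{u : \neg(s\Vdash_\mu\phi)\} \cup (\{u : p^u(s)=t\}\cap\{u : t \Vdash_\mu\psi\})\bigr)$. Now $\{u : \neg(s\Vdash_\mu\phi)\}$ and $\{u : t\Vdash_\mu\psi\}$ do not depend on $u$ at all — they are either $2^\omega$ or $\emptyset$ — and $\{u : p^u(s)=t\}$ is Borel as noted above, so the whole set is Borel.

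I expect the main obstacle to be purely presentational rather than mathematical: one must be careful that in clauses (3), (4), (6) and (7) the \vir{sub-program} fed to the recursion (e.g. $p^u(0)$, $p(n)$ in the paper's notation) depends on the oracle, so the recursion is not on a fixed program but on a program determined by $u$; the clean way to keep the induction honest is to phrase everything in terms of the sets $\{u : p^u(k)=j\}$ (which are Borel uniformly in $p,k,j$) and then take the appropriate countable Boolean combinations, invoking the inductive hypothesis only for genuine subformulas. Once this bookkeeping is set up, every clause is a countable Boolean combination of Borel sets, so $C_{p,\varphi}$ is Borel; and since every Borel set is Lebesgue measurable, $C_{p,\varphi}$ is measurable, which is the \vir{in particular} of the statement.
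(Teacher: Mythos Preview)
Your proposal is correct and follows the same approach as the paper: induction on the complexity of $\varphi$, using the closure of Borel sets under countable Boolean operations, with the implication clause singled out as the delicate one. Your treatment of that clause is in fact more careful than the paper's—the paper simply sets $A_s = C_{p(s),\psi_1}$ and appeals to the inductive hypothesis, whereas your explicit decomposition over the output value $t = p^u(s)$ makes transparent why the oracle-dependence causes no trouble and why the inductive hypothesis on the subformulas $\psi_0,\psi_1$ suffices.
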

\begin{proof}
The proof is an induction on the complexity of $\varphi$. All the cases except implication follow directly from the closure properties of the pointclass of Borel sets, see, e.g., \cite[Theorem 1C.2]{moschovakis}. Let us just prove the implication case. Let $\varphi\equiv \psi_0\rightarrow \psi_1$ and $p$ be a program. For every program $s$ let $A_s$ be $2^\omega$ if $s\not \Vdash_{\mu} \psi_0$ and $C_{p(s),\psi_1}$, otherwise. Then $C_{p,\varphi}=\bigcap_{s\in \mathbb{N}}A_s$. By inductive hypothesis we have that $A_s$ is Borel for every $s$ so $C_{p,\varphi}$ is a countable intersection of Borel sets, which is Borel.   
\end{proof}

\begin{corollary}\label{Cor:MURealInductive}
Let $\psi_0$ and $\psi_1$ be sentences and let $\phi$ be a formula. Then for every $p$ the following hold:
\begin{enumerate}
\item\label{Cor:MURealInductive1}  $p\Vdash_{\mu}\psi_0\land \psi_1$ if and only if there are $s$ and $q$ such that $s\Vdash_{\mu}\psi_0$ and $q\Vdash_{\mu}\psi_1$.
\item\label{Cor:MURealInductive2}  $p\Vdash_{\mu}\psi_0\lor \psi_1$ if and only if there is $q$ such that $q\Vdash_{\mu}\psi_0$ or $q\Vdash_{\mu}\psi_1$. 
\item\label{Cor:MURealInductive3}  If $p\Vdash_{\mu}\psi_0\rightarrow \psi_1$ then $p(s)\Vdash_{\mu} \psi_1$ for all $s$ such that $s\Vdash_{\mu} \psi_0$.
\item\label{Cor:MURealInductive4}  If $p\Vdash_{\mu}\exists{x}\phi$ then there there is $n\in \mathbb{N}$ such that $p(0)\Vdash_{\mu}\phi(n)$.
\item\label{Cor:MURealInductive5}  If $p\Vdash_{\mu}\forall{x}\phi$ then for all $n\in \mathbb{N}$ we have $p(n)\Vdash_{\mu}\phi(n)$.
\end{enumerate}

\end{corollary}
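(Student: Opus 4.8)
The plan is to prove all five clauses by unwinding Definition~\ref{Def:MuReal}, using two recurring moves together with the Push Up Lemma (Lemma~\ref{Lemma:PushUP}). First, by Corollary~\ref{Lem:Mu-real2Def} and Lemma~\ref{Lem:MasurabilityOfOracleSets}, $p\Vdash_\mu\varphi$ just means $\mu(C_{p,\varphi})>0$ for the measurable set $C_{p,\varphi}$; in particular such a witness set is \emph{nonempty}, so one may fix an oracle $u$ with $(p,u)\Vdash_\mathrm{O}\varphi$ and read off the relevant clause of the definition. Second, whenever $C_{p,\varphi}$ is split into countably many measurable pieces --- typically according to the value of an oracle computation $u\mapsto p^u(i)$, each set $\{u : p^u(i)=k\}$ being open since the computation halts after finitely many steps reading finitely many oracle bits --- countable additivity forces one piece to have positive measure. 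These two moves do essentially all of the work.

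For clauses~\ref{Cor:MURealInductive1} and~\ref{Cor:MURealInductive2}: in the forward direction of~\ref{Cor:MURealInductive1}, I would write $C_{p,\psi_0\land\psi_1}=\bigcup_{s}\bigl(C_{p,\psi_0\land\psi_1}\cap\{u : p^u(0)=s\}\bigr)$; on the $s$-th piece $(s,u)\Vdash_\mathrm{O}\psi_0$, so that piece lies inside $C_{s,\psi_0}$, and positive measure of the union yields some $s$ with $s\Vdash_\mu\psi_0$; splitting instead by the value of $p^u(1)$ yields $q\Vdash_\mu\psi_1$. For~\ref{Cor:MURealInductive2} one first splits $C_{p,\psi_0\lor\psi_1}$ into the two cases $p^u(0)=0$ and $p^u(0)=1$, then splits the surviving case by the value of $p^u(1)$. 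In the backward direction of~\ref{Cor:MURealInductive1}, the obvious pairing program (returning $s$ on input $0$ and $q$ on input $1$) realises $\psi_0\land\psi_1$ exactly on $C_{s,\psi_0}\cap C_{q,\psi_1}$, which could be null; I would therefore first use the Push Up Lemma to replace $s$ and $q$ by realisers of probability at least $\tfrac34$, so that the intersection has measure at least $\tfrac12$. For~\ref{Cor:MURealInductive2} no such correction is needed: the program returning $0$ on input $0$ and $q$ on input $1$ realises $\psi_0\lor\psi_1$ on all of $C_{q,\psi_0}$.

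For clauses~\ref{Cor:MURealInductive4} and~\ref{Cor:MURealInductive5}: for~\ref{Cor:MURealInductive5}, note that for each fixed $n$ we have $C_{p,\forall x\phi}\subseteq\{u : (p^u(n),u)\Vdash_\mathrm{O}\phi(n)\}$, and, unwinding the convention for $p(n)$ by an easy induction on the subformula, this last set equals $C_{p(n),\phi(n)}$; hence $\mu(C_{p(n),\phi(n)})>0$, i.e.\ $p(n)\Vdash_\mu\phi(n)$. For~\ref{Cor:MURealInductive4}, partition $C_{p,\exists x\phi}$ by the value $n$ of $p^u(1)$; on a piece of positive measure $(p^u(0),u)\Vdash_\mathrm{O}\phi(n)$ for one fixed $n$, and that piece is contained in $C_{p(0),\phi(n)}$, which therefore has positive measure.

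The step I expect to be the main obstacle is clause~\ref{Cor:MURealInductive3} (implication, i.e.\ the modus ponens case). Here the hypothesis $p\Vdash_\mu\psi_0\rightarrow\psi_1$ only tells us that, for oracles $u$ in the positive-measure set $C_{p,\psi_0\rightarrow\psi_1}$ and each $s\Vdash_\mu\psi_0$, the value $p^u(s)$ realises $\psi_1$ --- but the measure-theoretic content of that fact concerns oracles a priori unrelated to $u$. To obtain an actual realiser of $\psi_1$ from the given $p$ and $s$, one fixes $s\Vdash_\mu\psi_0$, restricts to $C_{p,\psi_0\rightarrow\psi_1}$, identifies $\{u:(p^u(s),u)\Vdash_\mathrm{O}\psi_1\}$ with $C_{p(s),\psi_1}$ exactly as in clause~\ref{Cor:MURealInductive5}, and argues that this set has positive measure, again via the decomposition-by-output-value argument. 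This is the point where one must be most careful about the precise reading of clause~(5) of Definition~\ref{Def:MuReal} and of the notation $p(s)$, since it is here that pointwise oracle-realisability and measure-theoretic $\mu$-realisability interact most delicately; the remaining four clauses are, by contrast, routine measure-theoretic bookkeeping.
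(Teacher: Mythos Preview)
Your treatment of clauses~\ref{Cor:MURealInductive1}, \ref{Cor:MURealInductive2}, \ref{Cor:MURealInductive4}, and \ref{Cor:MURealInductive5} is correct and close to the paper's. The only real difference is in the forward directions of \ref{Cor:MURealInductive1} and \ref{Cor:MURealInductive2}: you partition $C_{p,\varphi}$ according to the \emph{output value} $p^u(i)$ and extract a concrete natural-number realiser from a piece of positive measure, whereas the paper opens with the observation $(p(n),u)\Vdash_\mathrm{O}\phi\Leftrightarrow (p^u(n),u)\Vdash_\mathrm{O}\phi$ and then takes the \emph{program} $p(0)$, respectively $p(1)$, itself as the realiser, with no decomposition needed. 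Both routes are valid; yours avoids invoking that observation at the cost of a countable-additivity step, while the paper's gives the more specific realisers $s=p(0)$, $q=p(1)$. For \ref{Cor:MURealInductive4} and \ref{Cor:MURealInductive5} you and the paper argue essentially the same way, and the backward directions of \ref{Cor:MURealInductive1} and \ref{Cor:MURealInductive2} are handled identically, Push Up Lemma included.

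Your plan for clause~\ref{Cor:MURealInductive3}, however, has a genuine gap. You correctly locate the target set as $C_{p(s),\psi_1}=\{u:(p^u(s),u)\Vdash_\mathrm{O}\psi_1\}$, and you correctly observe that for $u\in C_{p,\psi_0\to\psi_1}$ the definition only yields $p^u(s)\Vdash_\mu\psi_1$, a statement about oracles a priori unrelated to $u$. But the ``decomposition-by-output-value'' move does not close this gap: splitting $C_{p,\psi_0\to\psi_1}$ into pieces $A_k=\{u:p^u(s)=k\}$ tells you that on each nonempty piece $k\Vdash_\mu\psi_1$, i.e.\ $\mu(C_{k,\psi_1})>0$; yet to conclude $\mu(C_{p(s),\psi_1})>0$ you would need $\mu(A_k\cap C_{k,\psi_1})>0$ for some $k$, and two sets of positive measure need not overlap in positive measure. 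The paper does \emph{not} decompose here: it argues directly that for every $u\in C_{p,\psi_0\to\psi_1}$ one already has $(p^u(s),u)\Vdash_\mathrm{O}\psi_1$, so that $C_{p,\psi_0\to\psi_1}\subseteq C_{p(s),\psi_1}$ and positivity follows immediately from $\mu(C_{p,\psi_0\to\psi_1})>0$.
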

\begin{proof}
Note that an case-by-case proof shows that for every $n\in \mathbb{N}$, $u\in 2^\omega$, and formula $\phi$ we have that:

$$(p(n),u)\Vdash_{\mathrm{O}} \phi \text{ iff } (p^{u}(n),u)\Vdash_{\mathrm{O}}\phi.$$

(\ref{Cor:MURealInductive1}) First assume that $p\Vdash_{\mu}\psi_0\land \psi_1\geq r$. Then we have that $\mu(C_{p,\psi_0\land \psi_1})\geq r$ and for all $u\in C_{p,\psi_0\land \psi_1}$ we have that $(p,u)\Vdash_{\mathrm{O}}\psi_0\land \psi_1$. But then for all $u\in C_{p,\psi_0\land \psi_1}$ we have that $(p^u(0),u)\Vdash_{\mathrm{O}}\psi_0$ and $(p^u(0),u)\Vdash_{\mathrm{O}}\psi_1$. So for every $u\in C_{p,\psi_0\land \psi_1}$ we have that $(p(0),u)\Vdash_{\mathrm{O}}\psi_0$ and $(p(1),u)\Vdash_{\mathrm{O}}\psi_1$ but then $p(0)\Vdash_{\mu}\psi_0\geq r$ and $p(1)\Vdash_{\mu}\psi_1\geq r$. Let $s=p(0)$ and $q$ be $p(1)$.

Now assume that $q\Vdash_{\mu}\psi_0$ and $s\Vdash_{\mu}\psi_1$. Therefore for all $u\in C_{q,\psi_0}$ and $v\in C_{s,\psi_1}$ we have that $(q,u)\Vdash_{\mathrm{O}}\psi_0$ and $(s,u)\Vdash_{\mathrm{O}}\psi_1$. By Lemma \ref{Lemma:PushUP} we can assume that $s$ and $q$ are such that  $\mu(C_{q,\psi_0}\cap C_{s,\psi_1})>r$. But then if we let $p$ be the program that returns $q$ on input $0$ and $s$ on input $1$, we have that for all $u\in C_{q,\psi_0}\cap C_{s,\psi_1}$ we have that  $(p(0),u)\Vdash_{\mathrm{O}}\psi_0$ and $(p(1),u)\Vdash_{\mathrm{O}}\psi_1$. Therefore $p\Vdash_{\mu}\psi_0\land \psi_1$ as desired.

(\ref{Cor:MURealInductive2}) First assume that $p\Vdash_{\mu}\psi_0\lor \psi_1$. Then we have that $\mu(C_{p,\psi_0\lor \psi_1})> 0$ and for all $u\in C_{p,\psi_0\lor \psi_1}$ we have that $(p,u)\Vdash_{\mathrm{O}}\psi_0\lor \psi_1$. But then for all $u\in C_{p,\psi_0\lor \psi_1}$ we have that $(p^u(1),u)\Vdash_{\mathrm{O}}\psi_0$ or $(p^u(1),u)\Vdash_{\mathrm{O}}\psi_1$. Moreover, note that by Lemma \ref{Lem:MasurabilityOfOracleSets} we have that both $C_{p(1),\psi_0}$ and $C_{p(1),\psi_1}$ are measurable sets and since $ C_{p,\psi_0\lor \psi_1} \subseteq C_{p(1),\psi_0}\cup C_{p(1),\psi_1}$ at least one of them is not null. Without loss of generality assume that $\mu(C_{p(1),\psi_0})>0$. So, $(p(1),u)\Vdash_{\mathrm{O}}\psi_0$ for all $u\in C_{p(1),\psi_0}$. Let $q$ be the program that for every oracle returns $p^{u}(1)$ on input $1$ and $0$ on input $0$. Then trivially $q\Vdash_{\mu} \psi_0$ as desired.

Now assume that $q\Vdash_{\mu}\psi_0$, the same proof works in the case in which $q\Vdash_{\mu}\psi_0$. Therefore for all $u\in C_{q,\psi_0}$ we have that $(q,u)\Vdash_{\mathrm{O}}\psi_0$. Then if we let $p$ be the program that returns $q$ on input $1$ and $0$ on input $0$, we have that for all $u\in C_{q,\psi_0}$ $(p(1),u)\Vdash_{\mathrm{O}}\psi_0$. Therefore $p\Vdash_{\mu}\psi_0\lor \psi_1$ as desired.

(\ref{Cor:MURealInductive3}) Assume that $p\Vdash_{\mu}\psi_0\rightarrow \psi_1$. Then we have that $\mu(C_{p,\psi_0\rightarrow \psi_1})> 0$ and for all $u\in C_{p,\psi_0\rightarrow \psi_1}$ we have that $(p,u)\Vdash_{\mathrm{O}}\psi_0\rightarrow \psi_1$. But then for all $u\in C_{p,\psi_0\rightarrow \psi_1}$ and for every $s$ such that $s\Vdash_{\mu}\psi_0$ we have $p^u(s)\Vdash_{\mu}\psi_1$. But for all $s$ such that $s\Vdash_{\mu}\psi_0$ we have that $(p^u(s),u)\Vdash_{\mathrm{O}}\psi_1$ and therefore $(p(s),u)\Vdash_{\mathrm{O}}\psi_1$. So, $p(s)\Vdash_{\mu}\psi_1$ for all $s$ such that $s\Vdash_{\mu}\psi_0$ as desired.

(\ref{Cor:MURealInductive4}) Assume that $p\Vdash_{\mu}\exists x\psi$. Then we have that $\mu(C_{p,\exists x\psi})> 0$ and for all $u\in C_{p,\psi_0\lor \psi_1}$ we have that $(p,u)\Vdash_{\mathrm{O}}\exists x\psi$. But then for all $u\in C_{p,\exists x\psi}$ we have that $(p^u(0),u)\Vdash_{\mathrm{O}}\psi(p^u(1))$. Moreover, note that by Lemma \ref{Lem:MasurabilityOfOracleSets} we have that for every $n\in \mathbb{N}$ the set $C_{p(0),\psi(n)}$ is measurable and since $C_{p,\exists x\psi} \subseteq  \bigcup_{n\in \mathbb{N}}C_{p(0),\psi(n)}$ there is $n\in \mathbb{N}$ such that $\mu(C_{p(0),\psi(n)})>0$. So, $(p(0),u)\Vdash_{\mathrm{O}}\psi(n)$ for all $u\in C_{p(0),\psi(n)}$ and therefore $p(0)\Vdash_{\mu}\psi(n)$ as desired.

(\ref{Cor:MURealInductive4}) Assume that $p\Vdash_{\mu}\forall x\psi$. Then we have that $\mu(C_{p,\forall x\psi})> 0$ and for all $u\in C_{p,\forall x\psi}$ we have that $(p,u)\Vdash_{\mathrm{O}}\forall x\psi$. But then for all $u\in C_{p,\exists x\psi}$ and every $n\in \mathbb{N}$ we have that $(p^u(n),u)\Vdash_{\mathrm{O}}\psi(n)$. So, $(p(n),u)\Vdash_{\mathrm{O}}\psi(n)$ and $p(n)\Vdash_{\mu}\psi(n)$ for all $n\in \mathbb{N}$. 
\end{proof}

\begin{corollary}\label{Cor:MURealInductiveProb1}
Let $\psi_0$ and $\psi_1$ be sentences and let $\phi$ be a formula. Then for every $p$ the following hold:
\begin{enumerate}
\item\ref{Cor:MURealInductive1}  $p\Vdash_{\mu}\psi_0\land \psi_1\geq 1$ if and only if $p(0)\Vdash_{\mu}\psi_0\geq 1$ and $p(1)\Vdash_{\mu}\psi_1\geq 1$.
\item If $p(1)\Vdash_{\mu}\psi_0\geq 1$ or $p(1)\Vdash_{\mu}\psi_1\geq 1$ then $p\Vdash_{\mu}\psi_0\lor \psi_1\geq 1$.
\item If $p(s)\Vdash_{\mu} \psi_1\geq 1$ for all $s$ such that $s\Vdash_{\mu} \psi_0$ then $p\Vdash_{\mu}\psi_0\rightarrow \psi_1\geq 1$.
\item If there there is $n\in \mathbb{N}$ such that $p(n)\Vdash_{\mu}\phi(n)\geq 1$ then $p\Vdash_{\mu}\exists{x}\phi\geq 1$.
\item If all $n\in \mathbb{N}$ we have $p(n)\Vdash_{\mu}\phi(n)\geq 1$ then $p\Vdash_{\mu}\forall{x}\phi\geq 1$.
\end{enumerate}
\end{corollary}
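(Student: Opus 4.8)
The plan is to mirror the proof of Corollary~\ref{Cor:MURealInductive} almost verbatim, using the same auxiliary observation established there, namely that $(p(n),u)\Vdash_{\mathrm{O}}\phi$ if and only if $(p^u(n),u)\Vdash_{\mathrm{O}}\phi$ for all $n\in\mathbb{N}$, $u\in 2^\omega$ and formulas $\phi$. The only genuinely new ingredient is that the collection of $\mu$-measure-one subsets of $2^\omega$ is closed under \emph{countable} intersections, not merely finite ones. This is what makes the present statement simultaneously easier than Corollary~\ref{Cor:MURealInductive} in some clauses --- we never need the Push Up Lemma to keep a measure positive --- and strictly more informative: the universal case requires amalgamating infinitely many realisers into one, which is exactly the operation that fails for plain $\mu$-realisability.

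For the conjunction case, the auxiliary observation yields $C_{p,\psi_0\land\psi_1}=C_{p(0),\psi_0}\cap C_{p(1),\psi_1}$, and since a finite intersection of measure-one sets has measure one, both directions are immediate. For the disjunction and existential cases one takes, exactly as in the proof of Corollary~\ref{Cor:MURealInductive}, the program $p$ assembled from the given witness; the auxiliary observation then identifies $C_{p,\psi_0\lor\psi_1}$, respectively $C_{p,\exists x\phi}$, with a set of the form $C_{q,\psi_i}$, respectively $C_{q,\phi(n)}$, which is of measure one by hypothesis. For the universal case the auxiliary observation gives $C_{p,\forall x\phi}=\bigcap_{n\in\mathbb{N}}C_{p(n),\phi(n)}$, a countable intersection of measure-one sets, hence of measure one.

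The implication case is the only one that needs an extra argument, and I expect it to be the main obstacle. From the hypothesis we only get $\mu(C_{p(s),\psi_1})=1$ for each $s$ with $s\Vdash_\mu\psi_0$, whereas to witness $(p,u)\Vdash_{\mathrm{O}}\psi_0\to\psi_1$ we need, for each such $s$, that $p^u(s)\Vdash_\mu\psi_1$ for $\mu$-almost every $u$, i.e. $\mu(C_{p^u(s),\psi_1})>0$; and this does not follow pointwise from the fact that $(p^u(s),u)\Vdash_{\mathrm{O}}\psi_1$. To close this gap, partition $2^\omega$ into the pieces $D_m=\{u:p^u(s)=m\}$. On $D_m$ we have $(p^u(s),u)\Vdash_{\mathrm{O}}\psi_1$ iff $u\in C_{m,\psi_1}$, so the union of those $D_m$ with $\mu(C_{m,\psi_1})=0$ meets $C_{p(s),\psi_1}$ in a null set; since $\mu(C_{p(s),\psi_1})=1$, it follows that $\mu(\{u:p^u(s)\Vdash_\mu\psi_1\})=1$. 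As $\{s:s\Vdash_\mu\psi_0\}$ is a set of natural numbers, hence countable, intersecting these measure-one sets over all such $s$ produces a measure-one set of oracles $u$ with $(p,u)\Vdash_{\mathrm{O}}\psi_0\to\psi_1$, that is, $p\Vdash_\mu\psi_0\to\psi_1\ge 1$. All constructions are uniform in the codings fixed earlier, so no further bookkeeping is needed.
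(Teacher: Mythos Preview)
Your proposal is correct and follows the paper's stated approach, which is simply ``an easy modification of the proof of Corollary~\ref{Cor:MURealInductive}''. Where the paper leaves everything implicit, you carry out the modification explicitly and, importantly, you notice that clause~(3) is \emph{not} a straightforward rewrite: the direction proved in Corollary~\ref{Cor:MURealInductive}(3) is the opposite one, and the hypothesis $\mu(C_{p(s),\psi_1})=1$ only tells you that $(p^u(s),u)\Vdash_{\mathrm{O}}\psi_1$ for almost every $u$, whereas the definition of $(p,u)\Vdash_{\mathrm{O}}\psi_0\to\psi_1$ demands $p^u(s)\Vdash_\mu\psi_1$, a statement about the \emph{index} $p^u(s)$. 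Your partition argument via the level sets $D_m=\{u:p^u(s)=m\}$ is exactly the missing step: for $m$ with $\mu(C_{m,\psi_1})=0$ one has $D_m\cap C_{p(s),\psi_1}\subseteq C_{m,\psi_1}$, so the bad $D_m$'s together are null, and then the countable intersection over all $s\Vdash_\mu\psi_0$ finishes the job. This is a genuine addition to what the paper writes; the other clauses really are routine once one uses the identity $C_{p,\forall x\phi}=\bigcap_n C_{p(n),\phi(n)}$ and closure of measure-one sets under countable intersections, as you indicate.
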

\begin{proof}
    The proof is an easy modification of the proof of Corollary \ref{Cor:MURealInductive}.
\end{proof}

\section{Classical Realisability and Random Realisability}
In this section, we will study the relationship between classical and random realisability. In particular we will show that the two notion do not coincide.

We start by proving that classical realisability and $\mu$-realisability agree on pretty $\Sigma_1$ sentences and that therefore $\mu$-realisability for pretty $\Sigma_1$ sentences coincides with truth.  

\begin{theorem}\label{Lem:RecClass}
Let $\phi$ be a pretty $\Sigma_1$ sentence in the language of arithmetic. Then, there are two computable functions $P_{\mu}$ and $P^{-1}_{\mu}$ such that for every $p$, (i) $p \Vdash \varphi$ implies $P_{\mu}(p,\varphi) \Vdash_{\mu}\varphi\geq 1$, and (ii) $p\Vdash_{\mu} \varphi$ implies $P_{\mu}^{-1}(p,\varphi)\Vdash\varphi$.
Therefore a pretty $\Sigma_1$ formula is true if and only if it is $\mu$-realised. The same result holds for universal $\Pi_1$ sentences.
\end{theorem}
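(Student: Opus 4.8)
The plan is to reduce the statement to the classical facts already available (Lemma~\ref{Lem:Sigma1Real} and Corollary~\ref{Cor:CompClassicalRealisability}) together with the inductive descriptions of $\mu$-realisability in Corollaries~\ref{Cor:MURealInductive} and~\ref{Cor:MURealInductiveProb1}. Two intermediate claims carry the weight.

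First, a $\mu$-version of Lemma~\ref{Lem:CompClassicalTruth}: there is a program $q$ such that for every true pretty $\Sigma_1$ sentence $\varphi$ one has $q(\varphi)\Vdash_\mu\varphi\geq 1$ (and likewise for universal $\Pi_1$ sentences, using Lemma~\ref{Lem:CompClassicalTruth2}). I would prove this by repeating the recursion on the structure of $\varphi$ from the proof of Lemma~\ref{Lem:CompClassicalTruth} almost verbatim, with two changes: the programs produced are oracle-programs that simply discard their oracle tape -- so that for a true $\varphi$ the set $C_{q(\varphi),\varphi}$ is all of $2^\omega$ -- and at each step the pieces are glued together using the corresponding clause of Corollary~\ref{Cor:MURealInductiveProb1} in place of the classical realisability clauses; at atomic subsentences truth gives $\mu(C_{\,\cdot\,,\,\cdot\,})=1$ outright. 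Setting $P_\mu(p,\varphi):=q(\varphi)$ (a total computable function, not even using $p$), direction~(i) follows: if $p\Vdash\varphi$ then $\varphi$ is true by Lemma~\ref{Lem:Sigma1Real}, whence $P_\mu(p,\varphi)\Vdash_\mu\varphi\geq 1$.

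Second: if a pretty $\Sigma_1$ sentence $\varphi$ is $\mu$-realised, then $\varphi$ is true. This is an induction on the structure of $\varphi$. For $\varphi\equiv n=m$, $p\Vdash_\mu\varphi$ means $\mu(C_{p,n=m})>0$, and $C_{p,n=m}$ is $2^\omega$ or $\emptyset$ according as $n=m$ holds or fails, so $n=m$. The cases $\psi_0\wedge\psi_1$, $\psi_0\vee\psi_1$, bounded $\forall x<k\,\psi$, and $\exists x\,\psi$ follow from the matching clauses of Corollary~\ref{Cor:MURealInductive} and the induction hypothesis; the universal $\Pi_1$ case uses clause~(5) of Corollary~\ref{Cor:MURealInductive} for the unbounded universal quantifier. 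Now let $P_\mu^{-1}(p,\varphi)$ be the output of the program from Corollary~\ref{Cor:CompClassicalRealisability} on input $\varphi$ -- a partial computable function, defined whenever $\varphi$ is true, in particular whenever $\varphi$ is $\mu$-realised. Then $p\Vdash_\mu\varphi$ implies $\varphi$ is true, so $P_\mu^{-1}(p,\varphi)$ is a classical realiser of $\varphi$, which is direction~(ii). The biconditional ``true iff $\mu$-realised'' is now immediate: from truth to classical realisability by Lemma~\ref{Lem:Sigma1Real}, then to $\mu$-realisability with probability $1$ by the first claim; and from $\mu$-realisability to truth by the second. The universal $\Pi_1$ case is handled identically using the $\Pi_1$ halves of all the cited results.

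I expect the first claim to be the main obstacle: one must ensure that the transported realiser succeeds on \emph{every} oracle (probability exactly $1$), not merely on a set of positive measure. The reason it works is that a pretty $\Sigma_1$ (or universal $\Pi_1$) formula never forces a realiser to consult the oracle, so the programs built in Lemma~\ref{Lem:CompClassicalTruth} can be carried over to oracle-programs that ignore the oracle tape; the care is needed at the $\vee$- and $\exists$-clauses, where the realiser must output genuine numbers (a disjunct index, a witness) rather than lifted program codes, which is exactly why the translation must follow the syntax of $\varphi$ rather than being a single uniform ``ignore-the-oracle'' lift of one classical program. A small point to pin down along the way is the precise convention for bounded universal quantifiers in Definition~\ref{Def:MuReal}, so that it matches clause~(6) in the proof of Lemma~\ref{Lem:CompClassicalTruth}.
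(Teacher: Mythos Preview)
Your route through truth is correct and in fact more economical than the paper's: rather than building genuine realiser-transformers, you let $P_\mu(p,\varphi)$ and $P_\mu^{-1}(p,\varphi)$ depend on $\varphi$ alone, rebuilding a realiser from scratch once truth is established. The paper instead defines $P_\mu$ and $P_\mu^{-1}$ by a simultaneous recursion that actually consumes the given $p$ (for instance, at the $\rightarrow$ case $P_\mu(p,\psi_0\rightarrow\psi_1)$ applies $P_\mu^{-1}$ to the incoming $\mu$-realiser of $\psi_0$, feeds the result to $p$, and then applies $P_\mu$ to the output). In several cases the paper, too, falls back on the search of Corollary~\ref{Cor:CompClassicalRealisability}, so the extra bookkeeping buys little.

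There is, however, one omission that matters. A $\Delta_0$ formula may contain implications, and you do not list $\psi_0\rightarrow\psi_1$ among the cases of your second claim. This case is not covered by Corollary~\ref{Cor:MURealInductive} alone: to go from $p\Vdash_\mu \psi_0\rightarrow\psi_1$ together with ``$\psi_0$ true'' to ``$\psi_1$ is $\mu$-realised'' via clause~(3), you must first exhibit some $s\Vdash_\mu\psi_0$, which is exactly your \emph{first} claim applied to the strict subformula $\psi_0$. Symmetrically, your first claim at $\rightarrow$, in the subcase ``$\psi_0$ false'', needs the contrapositive of the second claim for $\psi_0$ to conclude that the implication is vacuously $\mu$-realised with probability~$1$. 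So the two claims are entangled at $\rightarrow$ and must be proved by a single simultaneous induction on the complexity of the $\Delta_0$ formula---which is precisely the structure the paper adopts. Once you merge the two inductions, your argument goes through, and the extensions to pretty~$\Sigma_1$ and universal~$\Pi_1$ are then straightforward as you describe.
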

\begin{proof}
We define $P_{\mu}$ and $P_{\mu}^{-1}$ by recursion on $\varphi$ and will prove that they have the desired properties. We first define $P_{\mu}$ and $P_{\mu}^{-1}$ on $\Delta_0$ formulas.
\bigskip

(1) $\varphi$ is atomic: in this case realisability and $\mu$-realisability have the same realisers. So we can just let $P_{\mu}$ and $P_{\mu}^{-1}$ be the identity on atomic formulas. 

\bigskip

(2) If $\varphi\equiv \psi_0\land \psi_1$ where $\psi_0$ and $\psi_1$ are $\Delta_0$.  

First assume that $p\Vdash_{\mu}\psi_0\land \psi_1$. Then, $\mu(C_{p,\phi})>0$ and for every $u\in C_{p,\phi}$ we have $(p^u(0),u)\Vdash_{\mathrm{O}} \psi_0$ and $(p^{u}(1),u)\Vdash_{\mathrm{O}} \psi_1$. Let $p(i)$ be the program that for every oracle $u$ just returns $p^u(i)$ for $i\in \{0,1\}$. Then for $i\in \{0,1\}$ we have $p(i)\Vdash_{\mu}\psi_i$. Let $P_{\mu}^{-1}(p,\varphi)$ be the program that given input $i$ computes $g(p(i),\psi_i)$. 
By inductive hypothesis we have that $P_{\mu}^{-1}(p(i),\psi_i)\Vdash \psi_i$ for every $i\in \{0,1\}$ and therefore $P_{\mu}^{-1}(p,\varphi)$ realises $\varphi$ as desired.

On the other hand let $p\Vdash\psi_0\land \psi_1$. Then $p(i)\Vdash \psi_i$ for every $i\in \{0,1\}$. Let $P_{\mu}(p,\varphi)$ be the program that ignores the oracle and for every $i\in \{0,1\}$ returns $P_{\mu}(p(i),\psi_i)$. By inductive hypothesis we have that $P_{\mu}(p(i),\psi_i)\Vdash_\mu \psi_i\geq 1$. Note that $\mu(C_{P_{\mu}(p(0),\psi_0),\psi_0}\cap C_{P_{\mu}(p(1),\psi_1),\psi_1})=1$ and that for all $u\in C_{P_{\mu}(p(0),\psi_0),\psi_0}\cap C_{P_{\mu}(p(1),\psi_1),\psi_1}$ we have that $(P_{\mu}(p(0),\psi_0),u)\Vdash_{\mathrm{O}}\psi_0$ and $(P_{\mu}(p(1),\psi_1),u)\Vdash_{\mathrm{O}}\psi_1$. But then, since $P_{\mu}(p,\varphi)^u(i)=P_{\mu}(p(i),\psi_i)$ for every oracle $u$ and every $i\in \{0,1\}$, we have that $P_{\mu}(p,\varphi)\Vdash_{\mu}\psi_0\land \psi_1\geq 1$.

\bigskip

(3) If $\varphi\equiv \psi_0\lor \psi_1$ where $\psi_0$ and $\psi_1$ are $\Delta_0$. 

First assume that $p\Vdash_{\mu}\psi_0\lor \psi_1$. Then, $\mu(C_{p,\phi})>0$ and for every $u\in C_{p,\phi}$ we have that $(p^u(1),u)\Vdash_{\mathrm{O}}\psi_{p^u(0)}$. Let $p(1)$ be the program that for every oracle $u$ returns $p^u(1)$. Then there is $i\in \{0,1\}$ such that $p(1)\Vdash_{\mu}\psi_i$ by the proof of Corollary \ref{Cor:MURealInductive}. Let $P_{\mu}^{-1}(p,\varphi)$ be the program that for does the following:
starts by running in parallel two instances of the program of Corollary \ref{Cor:CompClassicalRealisability}, one with input $\psi_0$ and one with input $\psi_1$. By inductive hypothesis note that at least one of the two instances will halt. Let $i\in \{0,1\}$ be such that the $\psi_i$ instance halted first. Then, if the input is $0$, the program halts with output $P_{\mu}^{-1}(p(0),\psi_i)$, and if the input is $1$ the program returns $i$. 

Now let $p\Vdash\psi_0\lor \psi_1$ where $\psi_0$ and $\psi_1$ are $\Delta_0$. Then $p(1)\Vdash \psi_{p(0)}$. Let $P_{\mu}(p,\varphi)$ be the program that if the input is $0$ halts with output $P_{\mu}(p(1),\psi_{p(0)})$, and if the input is $1$ the program returns $p(0)$. 

By inductive hypothesis we have that $P_{\mu}(p(1),\psi_{p(0)})\Vdash_\mu \psi_{p(0)}\geq 1$. But then, by the proof of Corollary \ref{Cor:MURealInductiveProb1} 
since for every $u$, $P_{\mu}(p,\varphi)^u(1)=P_{\mu}(p(1),\psi_{p(0)})$, and $P_{\mu}(p,\varphi)(0)=p(0)$,  
we have $P_{\mu}(p,\varphi)\Vdash_\mu\psi_0\lor \psi_1\geq 1$ as desired.

\bigskip

(4) If $\varphi\equiv \psi_0\rightarrow \psi_1$ where $\psi_0$ and $\psi_1$ are $\Delta_0$.  First assume that $p\Vdash_{\mu}\psi_0\rightarrow \psi_1$. Let $P_{\mu}^{-1}(p,\phi)$ be the program that does the following: for every input returns the code of the instance of the program in Corollary \ref{Cor:CompClassicalRealisability} with input $\psi_1$. Note that, if $\psi_0$ is realisable, then by inductive hypothesis is $\mu$-realisable, by assumptions $\psi_1$ is $\mu$-realisable and by inductive hypothesis $\psi_1$ is realisable. In this case  for every $s$ we have that $P_{\mu}^{-1}(p,\phi)(s)\Vdash \psi_1$ and therefore $P_{\mu}^{-1}(p,\phi)$ is a realiser of $\psi_1$. On the other hand, if $\psi_1$ is not realisable, then any natural number realises $\phi$, so $P_{\mu}^{-1}(p,\phi)$ is again a realiser of $\phi$.

Now let $p\Vdash\psi_0\rightarrow \psi_1$ where $\psi_0$ and $\psi_1$ are $\Delta_0$. Then for every realiser $s$ of $\psi_0$ we have that $p(s)\Vdash \psi_{1}$. Let $P_{\mu}(p, \phi)$ be the code of the program that for every input $s$ and every oracle returns $P_{\mu}(p(P_{\mu}^{-1}(s,\psi_0)),\psi_1)$. By inductive hypothesis if $s$ is a $\mu$-realiser of $\psi_0$, so $P_{\mu}^{-1}(s,\psi_0)$ is a realiser of $s$. By assumption $p(P_{\mu}^{-1}(s,\psi_0))$ is a realiser of $\psi_1$, and again by inductive hypothesis we have that $P_{\mu}(p(P_{\mu}^{-1}(s,\psi_0),\psi_1)\Vdash_{\mu}\psi_1\geq 1$. But then by Corollary \ref{Cor:MURealInductiveProb1} we have that $P_{\mu}(p,\phi)\Vdash_{\mu}\phi\geq 1$ as desired.

\bigskip

(5) we omit the bounded quantifier cases because they are analogous to the conjunction and disjunction cases.

\bigskip

Now we extend the definition to pretty $\Sigma_1$ formulas.

\bigskip

(6) If $\varphi\equiv \exists{x} \psi$ where $\psi$ is pretty $\Sigma_1$. First assume that $p\Vdash_{\mu}\exists{x} \psi$. Then, by Corollary \ref{Cor:MURealInductive} there must be $n\in \mathbb{N}$ such that $p(0)\Vdash_{\mu}\psi(n)$ therefore, by inductive hypothesis, $\psi(n)$ is realised. Let $P_{\mu}^{-1}(p,\varphi)$ be the program that does the following: run in parallel all the instances of the program of Corollary \ref{Cor:CompClassicalRealisability} with input $\psi(n)$ with $n\in \mathbb{N}$. By inductive hypothesis note that one of these instances must halt. Let $i\in \mathbb{N}$ be the least such that the $\psi(i)$ instance halts. Then, if the input is $0$, the program returns $P_{\mu}^{-1}(p(0),\psi(i))$ and if it is $1$, the program returns $i$. 

Note that, by inductive hypothesis, the program halts and returns a realiser of $\phi$, as desired.

Now assume that $p\Vdash\exists{x}\psi$. Then $p(0)\Vdash \psi(p(1))$. Let $f(p,\varphi)$ be the program that returns $P_{\mu}(p(0),\psi(p(1)))$ if the input is $0$ and $p(1)$ if the input is $1$. By inductive hypothesis $P_{\mu}(p(0),\psi(p(1)))\Vdash_{\mu}\psi(p(1))\geq 1$. But then by Corollary \ref{Cor:MURealInductiveProb1} since for all $u$ we have $P_{\mu}(p,\varphi)^u(0)=f(p(0),\psi(p(1)))$ and $P_{\mu}(p,\varphi)^u(0)=p(1)$, we have that $P_{\mu}(p,\varphi)\Vdash_{\mu}\exists{x}\psi\geq 1$ as desired.

\bigskip

(7) If $\varphi\equiv \forall{x<n} \psi$, where $\psi$ is pretty $\Sigma_1$. First assume that $p\Vdash_{\mu}\forall{x<n} \psi$. Then, by Corollary \ref{Cor:MURealInductive}, for every natural number $m<n$ and for every program $s$ we have that $p(m)\Vdash_{\mu}\psi(m)$ and by inductive hypothesis $\psi(m)$ is realised. Let $P_{\mu}^{-1}(p,\varphi)$ be the program that for every $m$ returns a program that for every input if $m<n$ returns $P_{\mu}^{-1}(p(m), \psi(m))$ and returns $0$ otherwise. For all $m<n$, by inductive hypothesis we have that $P_{\mu}^{-1}(p(m), \psi(m)) \Vdash \psi(m)$ and therefore $P_{\mu}^{-1}(p,\varphi)$ is a realiser of $\phi$ as desired.

Now assume that $p\Vdash\forall{x<n}\psi$. Then for all $m<n$ and $s$ we have that $p(m)(s)\Vdash \psi(m)$. Let $P_{\mu}(p,\varphi)$ be the program that ignores the oracle and for every $m$ returns a program that given $q$ as input, if $m<n$ then returns $P_{\mu}(p(m)(q), \psi_1)$ otherwise returns $0$. Now note that for every $m<n$ and for every program $q$ we have that $P_{\mu}(p(m)(q), \psi_1)\Vdash_\mu \psi(m)\geq 1$. But then for every $m<n$, every $q$, and every $u\in 2^\omega$ we have that $(P_{\mu}(p,\varphi)^u(m))^u(q)\Vdash_{\mu} \psi\geq 1$, and therefore by Corollary \ref{Cor:MURealInductiveProb1} we have $P_{\mu}(p,\varphi)\Vdash_{\mu}\phi\geq 1$ as desired.

\bigskip

Finally we extend the $\Delta_0$ case to universal $\Pi_1$ formulas. 

\bigskip

(8) If $\varphi\equiv \forall{x} \psi$ where $\psi$ is universal $\Pi_1$.
First assume that $p\Vdash_{\mu}\forall{x} \psi$. Let $P_{\mu}^{-1}(p,\varphi)$ be the program that for all $n$ runs $P_{\mu}(P_{\mu}^{-1}(p(n),\psi(n)))$. By Corollary \ref{Cor:MURealInductive} and the inductive hypothesis $P_{\mu}^{-1}(p(n),\psi(n))$ is a realiser of $\psi(n)$. Therefore $P_{\mu}^{-1}(p,\varphi)$ is a realiser of $\forall {x} \psi$ as desired.

Now assume that $p\Vdash\forall{x}\psi$. Let $P_{\mu}(p,\phi)$ be the program that for every $n$ and for every oracle returns $P_{\mu}(p(n),\psi(n))$. Once more by inductive hypothesis for all $n$ and all $\mu(C_{P_{\mu}(p(n),\psi(n)),\psi(n)})=1$ but then $\mu(\bigcap_{n\in \mathbb{N}}C_{P_{\mu}(p(n),\psi(n)),\psi(n)})=1$ and $P_{\mu}(p,\varphi)\Vdash_\mu \varphi\geq 1$ as desired.

The second part of the statement follows from Lemma \ref{Lem:Sigma1Real}.
\end{proof}

\begin{corollary}\label{Lemma:NEG}
Let $\varphi$ be any false pretty $\Sigma_1$ sentence in the language of arithmetic. Then $(p,u)\Vdash_{\mathrm{O}}\varphi\rightarrow \bot$ and $p\Vdash_{\mu}\varphi\rightarrow \bot\geq 1$ for every $p$ and $u$. The same holds for universal $\Pi_1$ formulas.
\end{corollary}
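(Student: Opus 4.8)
The plan is to reduce everything to two observations: that $\bot$ is never $\mu$-realised, and that a \emph{false} pretty $\Sigma_1$ (resp.\ universal $\Pi_1$) sentence is never $\mu$-realised, the latter being exactly the content of Theorem \ref{Lem:RecClass} (together with Lemma \ref{Lem:Sigma1Real}).

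First I would record that $C_{p,\bot}=\emptyset$ for every program $p$, since clause (1) of Definition \ref{Def:MuReal} gives $(p,u)\not\Vdash_{\mathrm{O}}\bot$ for all $u$. Hence $\mu(C_{p,\bot})=0$, so there is no $q$ with $q\Vdash_\mu\bot$; in other words, nothing $\mu$-realises $\bot$.

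Next, unfolding clause (5) of Definition \ref{Def:MuReal}, the assertion $(p,u)\Vdash_{\mathrm{O}}\varphi\rightarrow\bot$ says that for every $s$ with $s\Vdash_\mu\varphi$ we have $p^u(s)\Vdash_\mu\bot$. Since no number $\mu$-realises $\bot$, this conditional holds precisely when its hypothesis is vacuous, i.e.\ when there is no $s$ with $s\Vdash_\mu\varphi$. By Theorem \ref{Lem:RecClass}, a pretty $\Sigma_1$ sentence (and likewise a universal $\Pi_1$ sentence) is $\mu$-realised if and only if it is true; contrapositively, since $\varphi$ is false, no $s$ $\mu$-realises $\varphi$. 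Therefore $(p,u)\Vdash_{\mathrm{O}}\varphi\rightarrow\bot$ holds for \emph{every} $p$ and $u$, which is the first claim.

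Finally, from $(p,u)\Vdash_{\mathrm{O}}\varphi\rightarrow\bot$ for all $u$ we get $C_{p,\varphi\rightarrow\bot}=2^\omega$, so $\mu(C_{p,\varphi\rightarrow\bot})=1$ and hence $p\Vdash_\mu\varphi\rightarrow\bot\geq 1$ for every $p$, giving the second claim. I do not expect any genuine obstacle here: the statement is a bookkeeping consequence of Theorem \ref{Lem:RecClass} and the emptiness of $C_{p,\bot}$; the only point to be careful about is the vacuous-quantifier step, and the fact that ``for every $p$ and $u$'' is asserting a vacuously satisfied condition rather than anything that requires a construction.
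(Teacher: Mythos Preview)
Your proof is correct and follows the same approach as the paper: use Theorem~\ref{Lem:RecClass} to conclude that a false pretty $\Sigma_1$ (or universal $\Pi_1$) sentence has no $\mu$-realiser, so the implication clause for $\varphi\rightarrow\bot$ is vacuously satisfied for every $p$ and $u$. The paper's version is simply a terser statement of exactly what you have written out in full.
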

\begin{proof}
By Theorem \ref{Lem:RecClass} every $\mu$-realisable pretty $\Sigma_1$ (universal $\Pi_1$) sentence $\phi$ is true. Therefore $\phi$ cannot be $\mu$-realised and every program is going to $\mu$-realise $\phi\rightarrow \bot$, which means that for all $p$ and for all $u$ we have $p\Vdash_{\mu}\phi\rightarrow \bot$ and  $(p,u)\Vdash_{\mathrm{O}}\phi\rightarrow \bot$ as desired.
\end{proof}

We are now ready to prove the main result of this section, namely that $\mu$-realisability and classical realisability do no coincide. This result is surprising given that by Sacks's theorem \cite[Corollary 8.12.2]{downey2010algorithmic} functions that are computable with a non-null set of oracles are computable by a classical Turing machine.  

\begin{theorem}\label{Lemma:MurealRealREC}
There is a sentence $\varphi$ in the language of arithmetic that is randomly realisable but not realisable.
\end{theorem}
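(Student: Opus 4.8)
The plan is to exhibit a single \emph{true} sentence $\Theta$ of the form $\forall z\,\exists w\,\forall s\,\lnot\psi(z,w,s)$ with $\psi$ a $\Delta_0$ predicate, arranged so that realising $\Theta$ would require a computable function that a diagonal argument forbids, while a $\mu$-realiser can afford to simply \emph{guess} the witness. Concretely, let $\psi(z,w,s)$ express ``the program $p_z$ halts on input $z$ within $s$ steps with output $w$'', and put $U_z:=\{w\,;\,\exists s\,\psi(z,w,s)\}$, which equals $\{p_z(z)\}$ when $p_z(z){\downarrow}$ and $\varnothing$ otherwise, so that $|U_z|\le 1$. In particular $\omega\setminus U_z\neq\varnothing$ for every $z$, hence $\Theta$ is true.

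First I would check that $\Theta$ is \emph{not realisable}. A realiser $r$ of $\Theta$ yields, for each $z$, via the clause for $\exists$, a number $r(z)(1)$ with $r(z)(0)\Vdash\forall s\,\lnot\psi(z,r(z)(1),s)$; since a realised $\Delta_0$ or universal $\Pi_1$ sentence is true (Lemma~\ref{Lem:Sigma1Real}), this forces $r(z)(1)\notin U_z$, i.e.\ $r(z)(1)\neq p_z(z)$ whenever $p_z(z){\downarrow}$. But $z\mapsto r(z)(1)$ is a total computable function, so there is an index $e$ with $p_e(z)=r(z)(1)$ for all $z$; evaluating at $z=e$ gives $p_e(e)=r(e)(1)\neq p_e(e)$, a contradiction. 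So no realiser exists.

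Next I would show that $\Theta$ \emph{is} $\mu$-realisable. Let $p$ be the oracle program that, on input $z$, reads the first $z+2$ bits of the oracle $u$ as a number $w^\ast\in\{0,\dots,2^{z+2}-1\}$, returns $w^\ast$ on argument $1$, and returns on argument $0$ the code of the program that outputs $0$ on every input; this last is a legitimate $\Vdash_{\mathrm{O}}$-realiser of $\forall s\,\lnot\psi(z,w^\ast,s)$ whenever $w^\ast\notin U_z$, because then each $\lnot\psi(z,w^\ast,s)$ is a true $\Delta_0$ sentence, while the false $\Delta_0$ sentence $\psi(z,w^\ast,s)$ is not $\mu$-realisable (Theorem~\ref{Lem:RecClass}), so its implication into $\bot$ is $\Vdash_{\mathrm{O}}$-realised by anything, with any oracle. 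Put $C:=\{u\,;\,\forall z,\ p(z)^u(1)\notin U_z\}$. Since $p(z)^u(1)$ is uniform over $\{0,\dots,2^{z+2}-1\}$ and $|U_z|\le 1$, we have $\mu(\{u\,;\,p(z)^u(1)\in U_z\})\le 2^{-(z+2)}$, whence $\mu(2^\omega\setminus C)\le\sum_z 2^{-(z+2)}\le\tfrac12$; and for every $u\in C$ the clauses for $\forall$ and $\exists$ give $(p,u)\Vdash_{\mathrm{O}}\Theta$ directly. Thus $p\Vdash_\mu\Theta\ge\tfrac12$, so $\Theta$ is $\mu$-realisable --- widening the window to $z+k$ bits reaches probability above $1-2^{-k}$, matching the supremum-$1$ clause of Definition~\ref{Def:MuReal}, or one simply invokes the Push Up Lemma~\ref{Lemma:PushUP}.

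The conceptual heart --- and the reason this is not in conflict with Sacks's theorem, despite the intuition voiced in the introduction --- is the observation above that the $\mu$-realiser computes \emph{no fixed function}: the witness it returns for $z$ is copied verbatim from the random oracle, and on the positive-measure but non-effective complement of $C$ it is simply wrong. Hence there is no single Skolem function for $\Theta$ that some oracle program computes correctly on a set of positive measure, so Sacks's theorem has nothing to say here. The one genuinely fiddly part of the write-up is the complexity bookkeeping inside the matrix $\forall s\,\lnot\psi(z,w,s)$ --- so that realising it from its truth, and the passage between $\Vdash$ and $\Vdash_{\mathrm{O}}$ for the inner $\Delta_0$ formulas, are all subsumed by what is already proved for pretty $\Sigma_1$ and universal $\Pi_1$ sentences (Lemma~\ref{Lem:CompClassicalTruth2}, Theorem~\ref{Lem:RecClass}); everything else is the one-line measure estimate above.
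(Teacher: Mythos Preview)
Your proof is correct and follows essentially the same approach as the paper: both use the sentence asserting that for every $k$ some $n$ avoids the output of $p_k(k)$, argue non-realisability by the obvious diagonalisation, and $\mu$-realise it by reading the existential witness from an initial segment of the oracle. Your formulation with $\lnot\psi$ in place of an explicit disjunction, together with the union-bound measure estimate $\sum_z 2^{-(z+2)}\le\tfrac12$, is if anything slightly cleaner than the paper's disjunction realiser and infinite-product computation.
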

\begin{proof}
Let $\varphi$ be the sentence \vir{For all $k$ there is $n$ such that for all $\ell$ the execution of $p_k(k)$ does not stop in at most $\ell$ steps or $p_k(k)\neq n$} and let $\psi(k)$ be the sentence \vir{There is $n$ such that for all $\ell$ the execution of $p_k(k)$ does not stop in at most $\ell$ steps or $p_k(k)\neq n$}.

A classical realiser for $\varphi$ would be a program that computes a total function that, for every code $k$ of a program, returns a natural number which is not the output of $p_k(k)$. By  diagonalization, such a program cannot exists: If $p_k$ was such a program, then it would follow that for every $n\in \omega$ we have that $p_k(k)=n\Leftrightarrow p_k(k)\neq n$.

Now we want to show that $\varphi$ is randomly realisable. 

Fix any realiser $s$. Let $p$ be the program that given an oracle $u\in 2^\omega$, a natural number $k$, and $i\in \{0,1\}$ does the following\footnote{Here, we do not distinguish between the finite sequence $u{\upharpoonright}(k+1)$ and the natural number coding it.}: Let $p^u(k)(i)=u{\upharpoonright}(k+1)$ if $i=1$ and $p^u(k)(i)=p'$ if $i=0$ where $p'$ is the program that ignores the oracle and does the following: 

On input $\ell$, $p^{\prime}$ checks whether $p_k(k)$ stops in $\ell$ steps. If not, then $p'(\ell)$ is the code of a program that returns $0$ on input $0$ and $s$ on. input $1$. Otherwise  $p'(\ell)$ is the code of a program that returns $1$ on input $0$ and on input $1$ looks for an $\mu$-realiser of the $\Delta_0$ formula expressing the fact that \vir{$p_k(k)\neq u{\upharpoonright}(k+1)$} by running the algorithms in Lemma \ref{Lem:Sigma1Real} and Theorem \ref{Lem:RecClass}.

Now, for every $k\in \omega$ and $u\in 2^\omega$ we have two cases:

$p_k(k)$ does not halt: then we have that $p^u(k)(1)=u{\upharpoonright}(k+1)$ and $p^u(k)(0)=p'$. Since $p_k(k)$ does not halt, we have $p'(\ell)(0)=1$ and $p'(\ell)(1)=s$ for every $\ell$. Moreover, by Corollary \ref{Lemma:NEG}  $(s,u)\Vdash_{\mathrm{O}} \text{\vir{$p_k(k)$ does not halt in $\ell$ steps}}$ and therefore $(p^u(k),u) \Vdash_{\mathrm{O}}\psi(k)$.

$p_k(k)$ halts: then we have that $p^u(k)(1)=u{\upharpoonright}(k+1)$ and $p^u(k)(0)=p'$. Let $\ell$ be such that $p_k(k)$ halts in at most $\ell$ steps. Then, $p'(\ell)(0)=0$. Moreover, note that if the output of $p_k(k)$ is not the same as the first $k$ bits of the oracle then $(p'(\ell)^u(1),u)\Vdash _{\mathrm{O}} u{\upharpoonright}(k+1) \neq p_k(k)$. 

We only need to show that $\mu(C_{p,\varphi})>0$. To see this, it is enough to note that the set of $u$ such that $p_k(k)\neq u{\upharpoonright}(k+1)$ has measure $\geq 1-\frac{1}{2^{(k+1)}}$. Therefore, $\mu(C_{p,\varphi})=\prod_{k\in \mathbb{N}}(1-\frac{1}{2^{(k+1)}})>0$ as desired.
\end{proof}

\begin{corollary}\label{Cor:Sentence realisable but not randomly}
There is a sentence in the language of arithmetic which is realisable but not randomly realisable.
\end{corollary}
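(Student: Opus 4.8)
The plan is to exploit the asymmetry between $\mu$-realisability and classical realisability by finding a sentence whose \emph{negation} is classically realisable but not $\mu$-realisable; equivalently, a true-under-classical-semantics negative statement that fails randomly. The natural candidate is the negation of the sentence $\varphi$ constructed in the proof of Theorem~\ref{Lemma:MurealRealREC}. Since $\varphi$ is not classically realisable (it asserts totality of a diagonal function), and since for pretty $\Sigma_1$-free negative sentences classical realisability of $\lnot\varphi$ ought to hold, the idea is to take $\psi \equiv \lnot\varphi$ and argue: (i) $\psi$ is classically realisable, because there is no classical realiser of $\varphi$ and hence (by the Kleene clause for implication and $\bot$) any program realises $\varphi \to \bot$; (ii) $\psi$ is \emph{not} $\mu$-realisable, because $\varphi$ \emph{is} $\mu$-realisable (Theorem~\ref{Lemma:MurealRealREC}) and the consistency lemma (the unlabelled lemma just after Definition~\ref{Def:MuReal}) tells us that $\varphi$ and $\lnot\varphi$ cannot both be $\mu$-realised.

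Concretely, first I would recall that by Theorem~\ref{Lemma:MurealRealREC} there is some program $p$ with $p \Vdash_\mu \varphi$. Then, by the consistency lemma for $\mu$-realisability, $\lnot\varphi$ is not $\mu$-realised: if some $q \Vdash_\mu \lnot\varphi$ then for $u \in C_{q,\lnot\varphi}$ the program $q^u(p)$ would realise $\bot$, a contradiction. So $\lnot\varphi$ is not $\mu$-realisable. Second, I would check that $\lnot\varphi$ is classically realisable. For this I use that $\varphi$ has no classical realiser — this is exactly the diagonalisation argument already carried out in the proof of Theorem~\ref{Lemma:MurealRealREC} (a classical realiser of $\varphi$ would compute a total function dodging every $p_k(k)$, which is impossible). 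Therefore the set of classical realisers of $\varphi$ is empty, so vacuously every program $n$ satisfies: for all $s$ with $s \Vdash \varphi$, $n(s) \Vdash \bot$. Hence any $n$ classically realises $\varphi \to \bot$, i.e.\ $\lnot\varphi$. Combining, $\lnot\varphi$ is realisable but not randomly realisable, which is the claim.

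The main obstacle — really a bookkeeping point rather than a deep one — is making sure the notion of "classical realisability of a negation" is handled correctly: one must confirm that the classical realisability clause for $\varphi\to\bot$ is the vacuously-satisfiable "for all realisers $s$ of $\varphi$, $p(s)\Vdash\bot$", and that $\varphi$ genuinely has \emph{no} classical realiser (not merely that none is obvious). The latter is already established inside the proof of Theorem~\ref{Lemma:MurealRealREC}, so I can cite it directly. A second small point is to confirm that the consistency lemma applies to $\lnot\varphi$ in the form stated — it is phrased for arbitrary sentences $\varphi$, so $\lnot\varphi$ qualifies, and its proof only uses that a $\mu$-realiser of $\lnot\varphi$ together with a $\mu$-realiser of $\varphi$ yields a $\mu$-realiser of $\bot$. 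With these checks in place the corollary follows in a few lines, essentially as a dual to Theorem~\ref{Lemma:MurealRealREC}.

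A cleaner way to phrase the write-up: take $\varphi$ and $p\Vdash_\mu\varphi$ as in Theorem~\ref{Lemma:MurealRealREC}; let $\chi \equiv \varphi\to\bot$. Since $\varphi$ has no classical realiser, $\chi$ is classically realised by every natural number. On the other hand, were $q\Vdash_\mu \chi$ for some $q$, then picking $u\in C_{q,\chi}$ (non-null, in particular nonempty) and unwinding Definition~\ref{Def:MuReal}(5) we would get $q^u(p)\Vdash_\mu\bot$, which is impossible since $C_{r,\bot}=\emptyset$ for every $r$. Hence $\chi$ is realisable but not randomly realisable, completing the proof.
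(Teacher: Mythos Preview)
Your proposal is correct and follows essentially the same argument as the paper: both take $\chi\equiv\varphi\to\bot$ with $\varphi$ as in Theorem~\ref{Lemma:MurealRealREC}, observe that $\chi$ is classically realised vacuously because $\varphi$ has no classical realiser, and that $\chi$ is not $\mu$-realised because $\varphi$ is $\mu$-realised while $\bot$ is not. Your write-up is in fact more detailed than the paper's two-line proof, explicitly unwinding the implication clause and invoking the consistency lemma where the paper simply asserts the conclusion.
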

\begin{proof}[Corollary \ref{Cor:Sentence realisable but not randomly}]
It is enough to consider the sentence $\varphi\rightarrow \bot$ where $\varphi$ is the sentence in the proof of Theorem \ref{Lemma:MurealRealREC}. The sentence is trivially realised since $\varphi$ is not realised. Moreover, the sentence is not $\mu$-realised since $\varphi$ is $\mu$-realised and $\bot$ is not $\mu$-realised.
\end{proof}

\section{Soundness \& Arithmetic}
In this section, we study the logic and arithmetic of $\mu$-realisability. We first observe that, in a certain sense, the Law of Excluded Middle is not $\mu$-realisable.

\begin{lemma}\label{Lemma:LEM}
There is $\phi$ such that $\forall{x}(\phi(x)\vee\neg\phi(x))$ is not $\mu$-realisable.
\end{lemma}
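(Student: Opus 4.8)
The plan is to take $\phi(x)$ to be a $\Delta_0$ predicate for which the corresponding total predicate is not decidable by any machine that succeeds on a non-null set of oracles. By Sacks's theorem, any function computed with positive probability is already recursive, so the obvious candidate is to let $\phi(k)$ express that the $k$-th program halts on input $k$ within some bounded search — more precisely, $\phi(k) \equiv \exists \ell\, (\text{``}p_k(k)\text{ halts in }\le \ell\text{ steps''})$, which is pretty $\Sigma_1$, and $\neg\phi(k)$ is its negation. Crucially, $\phi(k)$ is true exactly when $p_k(k)$ halts, i.e., $\{k : \phi(k)\text{ true}\}$ is $\emptyset'$, which is not recursive.

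First I would suppose toward a contradiction that some $p$ satisfies $p \Vdash_\mu \forall x(\phi(x)\vee\neg\phi(x)) \ge r$ for some $r>0$. By Corollary~\ref{Cor:MURealInductive}(\ref{Cor:MURealInductive5}), for every $n$ we get $p(n) \Vdash_\mu \phi(n)\vee\neg\phi(n)$, and then by Corollary~\ref{Cor:MURealInductive}(\ref{Cor:MURealInductive2}), for each $n$ there is $q_n$ with $q_n \Vdash_\mu \phi(n)$ or $q_n \Vdash_\mu \neg\phi(n)$. The key point is to read off which disjunct holds effectively from $p$ alone. Since $\phi(n)$ is pretty $\Sigma_1$ and $\neg\phi(n)$ is (classically) a universal $\Pi_1$ statement up to the usual rewriting, Theorem~\ref{Lem:RecClass} and Lemma~\ref{Lem:Sigma1Real} tell us that $\phi(n)$ is $\mu$-realisable iff it is true, and $\neg\phi(n)$ is $\mu$-realisable iff $\phi(n)$ is false (using Corollary~\ref{Lemma:NEG} together with the Lemma on consistency). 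Hence the branch taken by the disjunction realiser, extracted from $C_{p(n),\,\phi(n)\vee\neg\phi(n)}$ on any oracle in that non-null set, correctly tells us the truth value of $\phi(n)$, i.e., whether $p_n(n)$ halts.

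The second step is to turn this into an actual algorithm. Fix a rational $r'$ with $0 < r' < r$. Using Lemma~\ref{Lem:MasurabilityOfOracleSets}, the sets $C_{p(n),\phi(n)}$ and $C_{p(n),\neg\phi(n)}$ are Borel, and their union covers a set of measure $\ge r$; but measure is not something a Turing machine can compute, so instead I would run $p^u(n)(0)$ for $u$ ranging over longer and longer finite prefixes in parallel — since on a positive-measure set of oracles the value $p^u(n)(0) \in \{0,1\}$ is defined and picks the correct disjunct, for at least one finite prefix $s$ we will see $p^{s\frown 0^\omega}(n)(0)$ converge; call its value $i$. If $i=1$ we have a $\mu$-realiser of $\neg\phi(n)$ on a non-null set (after a density/push-up argument), so $\phi(n)$ is false; if $i=0$, $\phi(n)$ is true. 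Either way we have decided $\emptyset'$, a contradiction. The main obstacle is precisely this last point: the realiser's behaviour is only guaranteed on a non-null set of oracles, not on a single computable one, so I must argue that an unbounded search over finite prefixes, combined with the Push Up Lemma to certify that the observed disjunct is actually $\mu$-realised and hence (by Theorem~\ref{Lem:RecClass}) correct, yields a genuine halting computation — and that the two possible answers $i=0,1$ can never both appear for the same $n$, which follows from the consistency lemma since $\phi(n)$ and $\neg\phi(n)$ cannot both be $\mu$-realised.
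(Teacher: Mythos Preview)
Your choice of $\phi$ and the analysis in your first two paragraphs are exactly the paper's argument: once you know that on every oracle $u$ in the non-null set $C_{p,\forall x(\phi\vee\neg\phi)}$ the value $(p(n))^{u}(0)$ correctly records whether $p_n(n)$ halts, you have a single oracle machine computing the halting problem on a set of positive measure. The paper stops right there and invokes Sacks' theorem, which you already cited in your opening paragraph. So the proof is essentially complete at the end of your second paragraph.

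Your third paragraph, however, tries to replace the appeal to Sacks by an explicit decision procedure, and that construction has a genuine gap. When your search finds a finite string $s$ with $(p(n))^{s}(0)\downarrow=i$, all you know is that $(p(n))^{u}(0)=i$ for every $u\in \mathrm{N}_{s}$. But $\mathrm{N}_{s}$ may be entirely disjoint from the good set $C_{p,\forall x(\phi\vee\neg\phi)}$: the complement of a set of positive measure can certainly contain basic clopen sets. On such bad oracles $p$ is under no obligation to $O$-realise anything, so the observed $i$ need not correspond to a $\mu$-realised disjunct, and neither the Push Up Lemma nor the consistency lemma rules this out. In particular your claim that ``the two possible answers $i=0,1$ can never both appear'' is false: consistency says $\phi(n)$ and $\neg\phi(n)$ cannot both be $\mu$-realised, but it does not prevent $(p(n))^{u}(0)$ from taking the value $0$ on some bad $u$ and $1$ on another. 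The fix is precisely Sacks' theorem (or its majority-vote proof), not a first-halting-prefix search; this is why the paper simply cites Sacks and moves on.
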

\begin{proof}
Let $\varphi(x)$ be the formula expressing the fact that the program $p_x(x)$ halts. Assume that $\forall x (\varphi(x)\lor \lnot \varphi(x))$ is randomly realised. Then, there is a program $p$ such that $p\Vdash_{\mu}\forall x (\varphi(x)\lor \lnot \varphi(x))$. Therefore, $p$ computes the halting problem for a set of oracles of measure ${>0}$. But this directly contradicts Sacks' theorem \cite[Corollary 8.12.2]{downey2010algorithmic}.
\end{proof}

We now show that $\mu$-realisability is preserved by the inference rules of first-order intuitionistic proof calculus.

First, we need to fix what it means for $\phi$ to be $\mu$-realizable when $x$ occurs freely in $\phi$: This is defined to mean the same as the $\mu$-realisability of $\forall{x}\phi$.

\begin{definition}[Intuitionistic Calculus]
Inference rules are:
\begin{align*}
\mathrm{MP}:& \text{ from } \phi \text{ and } \phi \to \psi \text{ infer }\psi\\
\forall-\mathrm{GEN}: & \text{ from } \psi \to \phi \text{ infer }\psi \to (\forall x \ \phi )\text{, if } x \text{ is not free in }\psi.\\
\exists-\mathrm{GEN}: &\text{ from } \phi \to \psi \text{ infer }(\exists x \ \phi ) \to \psi\text{, if }x \text{ is not free in }\psi.
\end{align*}
The axioms are
\begin{align*}
\mathrm{THEN}-1:& \phi \to (\chi \to \phi )\\
\mathrm{THEN}-2:& (\phi \to (\chi \to \psi )) \to ((\phi \to \chi ) \to (\phi \to \psi )) \\
\mathrm{AND}-1: & \phi \land \chi \to \phi \\
\mathrm{AND}-2: & \phi \land \chi \to \chi\\
\mathrm{AND}-3: & \phi \to (\chi \to (\phi \land \chi ))\\
\mathrm{OR}-1: &\phi \to \phi \lor \chi\\
\mathrm{OR}-2: & \chi \to \phi \lor \chi\\
\mathrm{OR}-3: &(\phi \to \psi ) \to ((\chi \to \psi ) \to ((\phi \lor \chi) \to \psi ))\\
\mathrm{FALSE}: & \bot \to \phi\\
\mathrm{PRED}-1: &(\forall x \ \phi (x)) \to \phi (t) \text{, if the term $t$ is free for substitution} \\ & \text{for the variable $x$ in $\phi$}\\
\mathrm{PRED}-2: & \phi (t) \to (\exists x \ \phi (x)) \text{, with the same restriction as for $\mathrm{PRED}-1$.}
\end{align*}
\end{definition}

\begin{proof}[Theorem \ref{Theorem:Soundness}]
We show that (i) all instantiations of the axioms of intuitionistic first-order calculus are $\mu$-realisable and (ii) the set of $\mu$-realizable statements is closed under modus ponens, $\forall$-GEN and $\exists$-GEN.

We start with (i).


THEN-$1$: A $\mu$-realiser for an instance of $\phi\rightarrow(\chi\rightarrow\phi)$ needs to turn any given $\mu$-realiser $r$ for $\phi$ into one for $\chi\rightarrow\phi$. The $\mu$-realiser for $\chi\rightarrow\phi$ works by simply returning $r$ for any input.

THEN-$2$: Here, we are given a $\mu$-realiser $r$ for $\phi\rightarrow(\chi\rightarrow\psi)$ and our goal is to turn any $\mu$-realiser $p$ for $(\phi\rightarrow\chi)$ into a $\mu$-realiser $q$ for $\phi\rightarrow\psi$. Given $r$ and $p$, $q$ works as follows: Given a $\mu$-realiser $s$ for $\phi$, first use $r$ to compute from $s$ a $\mu$-realiser $t$ for $\chi\rightarrow\psi$ with positive probability; moreover, use $p$ to compute from $s$ a $\mu$-realiser for $u$ $\chi$ with positive probability. Then apply $t$ to $u$. 

AND-$1$ works by projecting the $\mu$-realiser for $\phi\wedge\chi$ to the first component, AND-$2$ by projecting to the second component. 

AND-$3$: We need to turn any $\mu$-realiser $p$ for $\phi$ into a $\mu$-realiser $q$ for $\chi\rightarrow(\phi\wedge\chi))$ with positive probability. Let $p$ be given. Also, let a $\mu$-realiser $r$ for $\chi$ be given. 
Now, $q$ works as follows: For a given oracle $x$, let $x=x_{0}\oplus x_{1}$, where, for real numbers $a$ and $b$, $a\oplus b$ denotes the join of $a$ and $b$, i.e., $2i\in a\oplus b$ iff $i\in a$ and $2i+1\in a\oplus b$ iff $i\in b$. Now $(q^{x}(0),x)$ runs $(p^{x_{0}}(0),x_{0})$ while $(q^{x}(1)(0),x)$ runs $(r^{x_{1}},x_{1})$. 

OR-$1$ works by, given a $\mu$-realiser $r$ for $\phi$, sending $0$ to $0$ and $1$ to $r$, OR-$2$ by sending $0$ to $1$ and $1$ to $r$.

OR-$3$: We need to turn any $\mu$-realiser $p$ for $\phi\rightarrow\psi$ into one for $((\chi\rightarrow\psi)\rightarrow((\phi\vee\chi)\rightarrow\psi))$ with positive probability. Let $q$ be a $\mu$-realiser for $\chi\rightarrow\psi$, and let $r$ be a $\mu$-realiser for $\phi\vee\chi$. Now, the sets $S_{0}$, $S_{1}$ of oracles relative to which $r$ realizes $\phi$ or $\chi$, respectively, are measurable, and as their union has positive measure, at least one of the sets $S_{0}$ and $S_{1}$ has positive measure. 
Thus, for a positive measure set $S$ of oracles $u$, $r^{u}(0)$ will terminate with output $i\in\{0,1\}$ such that $S_{i}$ has positive measure, so that $(r^{u}(1),u)$ will be an $O$-realiser of $\chi$ (if $i=0$) or $\psi$ (if $i=1$), respectively. 
Let us denote by $r(1)$ the program that, on oracle $u$, runs the program with index $r^{u}(1)$ in the oracle $u$. With positive probability, $r(1)$ will be an $O$-realiser of $\phi$ (if $i=0$) or $\psi$ (if $i=1$).
Now we proceeds as follows: Given $u$, first compute $r^{u}(0)$. If this is $0$, apply $p$ to $r(1)$. If it is $1$, apply $q$ to $r(1)$. With positive probability, it then happens that $p$ is applied to a $\mu$-realiser of $\phi$ or that $q$ is applied to a $\mu$-realiser of $\chi$. In both cases, we obtain a $\mu$-realiser of $\psi$. 
Thus, we obtain a $\mu$-realiser of $\psi$ with positive probability, as desired.

FALSE is $\mu$-realized by any program, as $\bot$ does not have $\mu$-realisers.

PRED-$1$: Here, $t$ will just be a natural number. Let $r$ be a $\mu$-realiser for $\forall{x}\phi(x)$. Let an oracle $u$ be given, and suppose that $r$ works for $u$ (i.e., $(r,u)\Vdash_{O}\forall{x}\phi(x)$), which happens for all $u$ from a set of positive measure. For each such $u$, $(r^{u}(t),u)$ will be an $O$-realiser for $\phi(t)$ by definition. Thus, the program $r(t)$ that, for given $u$, runs the program with index $r^{u}(t)$ in the oracle $u$ is a $\mu$-realiser for $\phi(t)$.

PRED-$2$: Let $r$ be a $\mu$-realiser for $\phi(t)$. Then a $\mu$-realiser $p$ for $\exists{x}\phi(x)$ works by letting $p^{u}(1)$ output $t$ and letting $p^{u}(0)$ output $r$ for every $u$.

Now for (ii).

(1) (MP) If $\phi$ and $\phi\rightarrow\psi$ are $\mu$-realizable, then so is $\psi$.

Suppose that $p$ $\mu$-realizes $\phi$ and that $q$ $\phi$-realizes $\phi\rightarrow\psi$. Pick a real number $x$ such that $(q,x)$ realizes $\phi\rightarrow\psi$ and run $q^{x}(p)$. By definition, the output is a $\mu$-realiser for $\psi$.


(2) $\forall$-GEN

Let $p$ be a $\mu$-realiser for $\psi\rightarrow\phi$, and let $q$ $\mu$-realize $\psi\rightarrow(\forall{x}\phi)$, where $x$ does not occur freely in $\psi$ but (possibly) in $\phi$. If $x$ does not occur freely in $\psi$, the claim is trivial since then $\forall{x}\phi$ is $\mu$-realizable if and only if $\phi$ is. We are given $n\in\omega$, our goal is to compute a realiser for $\phi(n)$. Pick some oracle $y$ such that $(p,y)$ realizes $\psi\rightarrow\phi$. Note that this means that $(p,y)$ computes a realiser for $\psi\rightarrow\phi(n)$ from any given $n\in\omega$. Now run this realiser in the input $n$; by definition, the output will be a $\mu$-realiser for $\phi(n)$, as desired.

(3) $\exists$-GEN

Let $p$ be a $\mu$-realiser for $\phi\rightarrow\psi$ and let $q$ be a $\mu$-realiser for $(\exists{x}\phi)\rightarrow\psi$, where $x$ is not free in $\psi$. Pick oracles $y$ and $z$ such that $(q,y)$ realizes $(\exists{x}\phi)\rightarrow\psi$ and $(p,z)$ realizes $\phi\rightarrow\psi$. Thus, $(q^{y}(0),y)$ realizes $\phi(n)$, where $n$ is the output of $q^{y}(1)$. Recall that $p$ is a $\mu$-realiser for $\forall{x}(\phi\rightarrow\psi)$. Thus, $p^{z}(n)$ is the index of a program $r$ that turns $\mu$-realisers for $\phi(n)$ into $\mu$-realisers for $\psi$. Consequently, running $p^{z}(n)$ on the input $q(1)$ yields a $\mu$-realiser for $\psi$.
\end{proof}

\begin{theorem}[Soundness]
\label{Theorem:Soundness}
The set of $\mu$-realizable statements is closed under the rules of intuitionistic first-order calculus.
\end{theorem}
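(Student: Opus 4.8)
The plan is to split the statement into its two standard ingredients: (i) every instance of every axiom schema of the intuitionistic calculus is $\mu$-realisable, and (ii) the class of $\mu$-realisable sentences is closed under $\mathrm{MP}$, $\forall$-$\mathrm{GEN}$ and $\exists$-$\mathrm{GEN}$. Together these give closure under intuitionistic derivations. The tools I would lean on throughout are the Push Up Lemma (Lemma \ref{Lemma:PushUP}), which lets us recover \emph{any} positive realisation probability once we have some, the inductive characterisations of Corollary \ref{Cor:MURealInductive} and Corollary \ref{Cor:MURealInductiveProb1}, which let us pass between a realiser of a compound sentence and realisers of its immediate subsentences, and the measurability of the sets $C_{p,\phi}$ (Lemma \ref{Lem:MasurabilityOfOracleSets}).

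For the inference rules, the key point I would exploit is that the $\Vdash_{\mathrm{O}}$-clause for implication already quantifies over \emph{all} $\mu$-realisers of the antecedent and only demands a $\mu$-realiser (not an $O$-realiser relative to the same oracle) of the consequent. Hence for $\mathrm{MP}$, given $p\Vdash_{\mu}\phi$ and $q\Vdash_{\mu}\phi\rightarrow\psi$, the set $C_{q,\phi\rightarrow\psi}$ has positive measure, so it is nonempty; picking any $u$ in it, the implication clause immediately gives that $q^{u}(p)$ is a $\mu$-realiser of $\psi$. The same ``fix one good oracle for the implication-realiser'' strategy handles $\forall$-$\mathrm{GEN}$ and $\exists$-$\mathrm{GEN}$: for $\forall$-$\mathrm{GEN}$ one reduces, using the convention that $\mu$-realisability of an open formula means that of its universal closure, to the case where $x$ is free in $\phi$, fixes a good oracle $y$ for $\psi\rightarrow\phi$, produces uniformly in $n$ a $\mu$-realiser of $\psi\rightarrow\phi(n)$, and then re-assembles these via Corollary \ref{Cor:MURealInductive} into a $\mu$-realiser of $\psi\rightarrow\forall x\,\phi$; for $\exists$-$\mathrm{GEN}$ one additionally uses Corollary \ref{Cor:MURealInductive} to extract the witness $n$ from a $\mu$-realiser of $\exists x\,\phi$ before applying the hypothesis $\phi\rightarrow\psi$.

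For the axioms, the propositional ones ($\mathrm{THEN}\text{-}1$, $\mathrm{AND}\text{-}1$, $\mathrm{AND}\text{-}2$, $\mathrm{OR}\text{-}1$, $\mathrm{OR}\text{-}2$) and the predicate ones ($\mathrm{PRED}\text{-}1$, $\mathrm{PRED}\text{-}2$, where the substituted term is a numeral) have realisers that ignore the oracle or use it only to relay a sub-realiser, so the associated set of good oracles is all of $2^{\omega}$ or contains a fixed good oracle of an earlier realiser; $\mathrm{FALSE}$ is $\mu$-realised by every program because $\bot$ has no $\mu$-realiser. The genuinely delicate cases are $\mathrm{THEN}\text{-}2$, $\mathrm{AND}\text{-}3$ and $\mathrm{OR}\text{-}3$, where the realiser must run two or three previously given $\mu$-realisers, each working only on \emph{its own} positive-measure set of oracles; one cannot simply intersect the good sets, as positive-measure sets may be disjoint. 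Here I would route each sub-computation to its own stream by splitting the incoming oracle via the join, $x=x_{0}\oplus x_{1}$ (and $x_{0}\oplus x_{1}\oplus x_{2}$ for $\mathrm{OR}\text{-}3$), so that the set of $x$ for which all sub-computations land in their respective good sets has measure equal to the product of the individual positive measures, hence is positive; alternatively one could inflate each good set to measure $>1-\epsilon$ by the Push Up Lemma and then intersect. $\mathrm{OR}\text{-}3$ needs one further observation: a $\mu$-realiser $r$ of $\phi\vee\chi$ splits its good oracles into the two measurable sets (Lemma \ref{Lem:MasurabilityOfOracleSets}) where it goes left and where it goes right, at least one of which has positive measure, and on that set the ``second-component'' program associated to $r$ is a $\mu$-realiser of the corresponding disjunct, which can then be fed to the appropriate hypothesis realiser.

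The main obstacle is precisely this oracle-stream bookkeeping in the multi-premise axioms and rules, together with the accompanying measure estimate: the whole argument goes through only because the implication clause of $\Vdash_{\mathrm{O}}$ outputs a $\mu$-realiser, so all we ever need to re-establish is positive probability, which the join trick (or the Push Up Lemma) delivers; by contrast the naive variant $\Vdash_{\mathrm{Or}}$ discussed after Definition \ref{Def:MuReal} would force preservation of a \emph{fixed} oracle across applications, and that is exactly what fails for $\mathrm{MP}$ and $\forall$-$\mathrm{GEN}$. Once this distinction is made to carry every case, the remaining content is a sequence of routine program constructions.
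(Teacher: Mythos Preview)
Your proposal is correct and follows essentially the same route as the paper's own proof: the same two-part split into axioms and rules, the same ``fix one good oracle for the implication realiser'' argument for $\mathrm{MP}$, $\forall$-$\mathrm{GEN}$ and $\exists$-$\mathrm{GEN}$, and the same oracle-splitting via the join for the multi-premise axioms (the paper spells the join out only for $\mathrm{AND}\text{-}3$ and handles $\mathrm{OR}\text{-}3$ via exactly the measurability splitting you describe, so your flagging of $\mathrm{THEN}\text{-}2$ as a join case is in fact more explicit than the paper's somewhat compressed treatment).

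One small caveat: the Push Up ``alternative'' you offer does not actually furnish a realiser of these axioms. The density point in the proof of Lemma~\ref{Lemma:PushUP} is not computable from the given realiser, whereas the implication clause of $\Vdash_{\mathrm{O}}$ demands a \emph{program} that maps $\mu$-realisers to $\mu$-realisers; so inside the realiser you cannot inflate $C_{p,\phi}$ and $C_{r,\chi}$ before intersecting. The join trick (routing independent sub-computations to disjoint oracle streams) is the operative device here, and that is also what the paper uses.
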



It is a classical result that the axioms of Heyting Aritmetic are realisable, see \cite[Theorem 1]{Nelson}. We show that only a fragment of $\mathrm{HA}$ is $\mu$-realisability. Let $\mathrm{HA}^-$ denote the axioms of Peano arithmetic without the induction schema.
As usual, \emph{Heyting arithmetic $\mathrm{HA}$} is the theory obtained from adding the induction schema to $\mathrm{HA}^{-}$. We say that a set of formulas $\Gamma$ is $\mu$-realised if $\varphi$ is $\mu$-realised for all $\varphi\in \Gamma$.

Since all the axioms except for the induction schema are universal $\Pi_1$ statements, it follows by Theorem \ref{Lem:RecClass} that the axioms of $\mathrm{HA}^-$ are all $\mu$-realised.

\begin{theorem}\label{Theorem Heyting Aritmetic}
The set $\mathrm{HA}^-$ is $\mu$-realised.    
\end{theorem}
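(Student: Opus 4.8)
The plan is to spell out the remark that precedes the theorem and reduce the statement directly to Theorem~\ref{Lem:RecClass}. First I would fix a concrete axiomatisation of $\mathrm{HA}^-$, namely the usual Peano axioms in the language $\{0,S,+,\cdot\}$ minus the induction schema: $\forall x\,(S(x)\neq 0)$, $\forall x\forall y\,(S(x)=S(y)\to x=y)$, $\forall x\,(x+0=x)$, $\forall x\forall y\,(x+S(y)=S(x+y))$, $\forall x\,(x\cdot 0=0)$, and $\forall x\forall y\,(x\cdot S(y)=x\cdot y+x)$. I would then observe that each of these is a sentence of the form $\forall x_0\cdots\forall x_n\,\psi$ with $\psi$ quantifier-free, hence in particular $\Delta_0$; so every axiom of $\mathrm{HA}^-$ is a universal $\Pi_1$ sentence in the sense of Section~2.

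Next I would note that each of these axioms is true in the standard model of arithmetic --- this is just the elementary arithmetic of $0$, successor, addition, and multiplication. By the universal $\Pi_1$ case of Theorem~\ref{Lem:RecClass} (equivalently, of Lemma~\ref{Lem:Sigma1Real}), a universal $\Pi_1$ sentence is $\mu$-realised if and only if it is true; in fact Theorem~\ref{Lem:RecClass} produces, from a classical realiser, a program that $\mu$-realises the sentence with probability $1$. Applying this to each axiom of $\mathrm{HA}^-$ shows that each of them is $\mu$-realised, and hence, by the definition of $\mu$-realisability for a set of formulas, that $\mathrm{HA}^-$ is $\mu$-realised.

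There is essentially no obstacle here; the only point requiring a moment's care is the bookkeeping that every chosen axiom really is universal $\Pi_1$. If one prefers an axiomatisation of $\mathrm{HA}^-$/$\mathrm{PA}^-$ that includes a discreteness or predecessor axiom such as $\forall x\,(x=0 \lor \exists y\, S(y)=x)$, one should replace the inner existential quantifier by the bounded quantifier $\exists y<x$, so that the matrix stays $\Delta_0$ and the whole sentence remains universal $\Pi_1$; with the standard axioms listed above this issue does not even arise. It is worth recording that this is exactly where the induction schema differs: an arbitrary induction instance need not be universal $\Pi_1$ (nor pretty $\Sigma_1$), so Theorem~\ref{Lem:RecClass} does not apply to it, which is consistent with the failure of $\mu$-realisability for some induction instances.
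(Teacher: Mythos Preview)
Your proposal is correct and follows exactly the paper's own approach: observe that every axiom of $\mathrm{HA}^-$ is a true universal $\Pi_1$ sentence and apply Theorem~\ref{Lem:RecClass}. You have simply spelled out in more detail what the paper states in the single sentence preceding the theorem.
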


Contrary to the classical case the induction schema fails for $\mu$-realisability.  

\begin{theorem}\label{Theo:IndFail}
The induction schema is not $\mu$-realised.  
\end{theorem}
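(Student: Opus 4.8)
The plan is to exhibit a single instance of induction whose $\mu$-realisability would yield, via the inductive construction of $\mu$-realisers, a non-trivial algorithm that cannot exist — mirroring the strategy already used for the law of excluded middle in Lemma~\ref{Lemma:LEM} and for the separating sentence in Theorem~\ref{Lemma:MurealRealREC}. Concretely, I would let $\phi(x)$ be a $\Delta_0$ formula (with a parameter) encoding a step-counter for the universal machine — for instance, $\phi(x) \equiv \text{``}p_x(x)$ does not halt in at most $x$ steps$\text{''}$, or a similar bounded statement — and consider the induction instance $I_\phi \equiv \big(\psi(0) \wedge \forall{x}(\psi(x)\rightarrow\psi(x+1))\big)\rightarrow\forall{x}\psi(x)$ for a suitably chosen $\psi$ built from $\phi$. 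The point is to arrange that the hypothesis of $I_\phi$ is $\mu$-realisable (indeed true, hence realisable, hence $\mu$-realisable with probability $1$ by Theorem~\ref{Lem:RecClass}, since it is a true universal $\Pi_1$ or pretty $\Sigma_1$ statement), while $\forall{x}\psi(x)$ is \emph{not} $\mu$-realisable because a $\mu$-realiser for it would decide the halting problem on a non-null set of oracles, contradicting Sacks's theorem \cite[Corollary 8.12.2]{downey2010algorithmic}.

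The key steps, in order, are: (1) fix the formula $\phi$ and the resulting induction instance $I_\phi$, choosing $\psi$ so that $\forall{x}\psi(x)$ is (classically equivalent to) a statement whose $\mu$-realiser would compute the halting set; a natural choice makes $\psi(x)$ say ``the halting status of $p_x(x)$ is correctly recorded'' so that a realiser of $\forall{x}\psi(x)$ by Corollary~\ref{Cor:MURealInductive}(\ref{Cor:MURealInductive5}) gives, for each $x$, a $\mu$-realiser of $\psi(x)$, and by unwinding the disjunction/existential structure (again via Corollary~\ref{Cor:MURealInductive}) extracts the bit ``$p_x(x)$ halts'' on a positive-measure set of oracles. (2) Show the antecedent of $I_\phi$ is $\mu$-realisable: the base case $\psi(0)$ and the step $\forall{x}(\psi(x)\rightarrow\psi(x+1))$ should be chosen to be true universal $\Pi_1$ / pretty $\Sigma_1$ sentences, so Theorem~\ref{Lem:RecClass} applies and gives $\mu$-realisability with probability $1$; then Corollary~\ref{Cor:MURealInductiveProb1}(1) combines them into a $\mu$-realiser of the conjunction. (3) Assume $I_\phi$ is $\mu$-realised by some $p$; apply Corollary~\ref{Cor:MURealInductive}(\ref{Cor:MURealInductive3}) to the $\mu$-realiser of the antecedent to obtain a $\mu$-realiser of $\forall{x}\psi(x)$. (4) Derive the contradiction with Sacks's theorem exactly as in Lemma~\ref{Lemma:LEM}.

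The main obstacle will be step~(1): setting up $\psi$ (and hence $\phi$) so that \emph{both} halves of the trade-off hold simultaneously — the antecedent must be genuinely (classically) true and of low enough logical complexity to invoke Theorem~\ref{Lem:RecClass}, while $\forall{x}\psi(x)$, although also classically true, must fail to be $\mu$-realisable. The natural way to thread this needle is to let $\psi(x)$ be a disjunction of the form ``$p_x(x)$ halts, \emph{and a witness is recorded}'' $\vee$ ``$p_x(x)$ does not halt'' where the first disjunct is only $\Sigma_1$ (so its $\mu$-realiser must actually produce halting information) while the second is $\Pi_1$; then $\forall{x}\psi(x)$ is true but a $\mu$-realiser must, by the analysis of Corollary~\ref{Cor:MURealInductive}, decide which disjunct holds for each $x$ on a positive-measure set, i.e.\ decide halting — whereas the base and step instances are chosen to be decidable/true at low complexity. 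One must check carefully that the inductive step $\psi(x)\rightarrow\psi(x+1)$ is itself $\mu$-realisable without implicitly deciding halting; this is the delicate point, and it is handled by making the step's realiser trivial (the truth of $\psi(x+1)$ does not follow \emph{constructively} from $\psi(x)$, so one instead takes $\psi$ to be such that $\psi(0)$ and the step are realisable for structural reasons while the universal closure is not — e.g.\ by a telescoping/bookkeeping trick as in the proof of Theorem~\ref{Lemma:MurealRealREC}). I expect the final write-up to choose $\phi$ and $\psi$ so that these three checks become short appeals to Theorem~\ref{Lem:RecClass} and Corollary~\ref{Cor:MURealInductiveProb1}, with the entire weight of the theorem resting on the Sacks's theorem contradiction in the last step.
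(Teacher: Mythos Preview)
Your overall architecture is right --- exhibit a $\psi$ so that the induction hypothesis is $\mu$-realisable while $\forall x\,\psi(x)$ is not, the latter failing because a $\mu$-realiser would decide halting on a non-null set (Sacks). The paper does exactly this, with $\psi(x)\equiv$ ``for all $i<x$, $p_i(i)$ halts or does not halt''. Where you diverge, and where the gap is, is in how you plan to $\mu$-realise the step $\forall x(\psi(x)\rightarrow\psi(x+1))$.

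You want to arrange that the step is a true pretty $\Sigma_1$ or universal $\Pi_1$ sentence so that Theorem~\ref{Lem:RecClass} applies. But with the kind of $\psi$ you describe --- a disjunction with a genuine $\Sigma_1$ disjunct and a genuine $\Pi_1$ disjunct --- the implication $\psi(x)\rightarrow\psi(x+1)$ is not in either class, and I do not see how to choose $\psi$ so that it is while still making $\forall x\,\psi(x)$ encode halting. You flag this yourself as ``the delicate point'' but then defer it to ``structural reasons'' and Theorem~\ref{Lem:RecClass}; that deferral is exactly where the argument breaks.

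The paper's mechanism is different and does not go through Theorem~\ref{Lem:RecClass} at all. The point is that each \emph{instance} $\psi(n)$ is $\mu$-realisable directly, with probability $2^{-n}$, by reading $n$ bits of the oracle as guesses for the halting status of $p_0,\dots,p_{n-1}$. Now recall clause~(5) of Definition~\ref{Def:MuReal}: $(p,u)\Vdash_{\mathrm{O}}\phi\rightarrow\chi$ asks only that $p^u(s)\Vdash_\mu\chi$ for every $s\Vdash_\mu\phi$ --- the output need merely be \emph{some} $\mu$-realiser of $\chi$, with its own (possibly tiny) positive probability, and there is no coupling with the oracle $u$. Hence the program that on input $n$ ignores everything and returns the $2^{-(n+1)}$-probability realiser of $\psi(n+1)$ already $\mathrm{O}$-realises $\psi(n)\rightarrow\psi(n+1)$ for \emph{every} $u$, so the universal step is $\mu$-realised with probability $1$. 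This exploits precisely the feature the paper highlights after the proof: the definition imposes no relationship between the measures on the two sides of an implication. Your proposal never invokes this asymmetry, and without it the step cannot be handled.
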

\begin{proof}
Let $\varphi(x)$ be the formula expressing the fact that \vir{Every program with code $i<x$ halts or does not halt}. By the proof of Lemma \ref{Lemma:LEM}, $\varphi$ is not $\mu$-realisable.

On the other hand, a $\mu$-realiser $p(n)$ for $\varphi(n)$ is given by a program that does the following: for every $i<n$, $p$ returns a program that if the $i$th element of the oracle is $1$ returns $1$ on input $0$ and any number on input $1$. While if the $i$th element of the oracle is $0$ the program returns $0$ on input $0$ and on input $1$ starts building a realiser of \vir{the program $i$ halts} using the algorithm in Lemma \ref{Cor:CompClassicalRealisability}; if it finds one, it runs the algorithm in Theorem \ref{Lem:RecClass} to compute the desired $\mu$-realiser.

It is not hard to see that the algorithm  works with probability $\frac{1}{2^n}$. Thus, to realize the implication $\varphi(n)\rightarrow \varphi(n+1)$, we can ignore the $\mu$-realiser for $\varphi(n)$ and just output $p(n)$. So the premise of the instance of the induction schema is $\mu$-realised, while the conclusion is not.
\end{proof}

Note that the proof of Theorem \ref{Theo:IndFail} heavily relies on the fact that the definition of $\mu$-realisability does not require any relationship between the measures of the set of oracles realising the antecedent of an implication and the set of oracles realising the consequent. We think that a modification of this definition could lead to a notion of probabilistic realisability that realises the induction schema.  

Even though the axiom schema of induction is not $\mu$-realisable, one can prove that all $\Delta_0$-instances of the schema are realisable. Indeed, by Theorem \ref{Lem:RecClass} and the fact that if $\varphi$ is a $\Delta_0$ formula then $\forall x \varphi(x,\bar{y})$ is a universal $\Pi_1$ formula, we have the following:

\begin{corollary}
The set $\mathrm{HA}^{-}$ together with the induction schema restricted to $\Delta_0$ formulas is $\mu$-realisable.
\end{corollary}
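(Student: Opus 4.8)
The plan is to verify the two ingredients cited in the remark before the corollary and then combine them with Theorem~\ref{Theorem:Soundness}. First I would recall that $\mathrm{HA}^-$ is $\mu$-realised by Theorem~\ref{Theorem Heyting Aritmetic}, so it only remains to $\mu$-realise every instance of the induction schema in which the induction formula is $\Delta_0$. Fix a $\Delta_0$ formula $\varphi(x,\bar y)$ (with only the displayed variables free) and consider the corresponding induction axiom, which after the usual universal closure over the parameters $\bar y$ has the shape
\[
\forall \bar y\bigl((\varphi(0,\bar y)\wedge \forall x(\varphi(x,\bar y)\rightarrow \varphi(x+1,\bar y)))\rightarrow \forall x\,\varphi(x,\bar y)\bigr).
\]
Recalling the convention fixed before Definition~\ref{Def:MuReal} that $\mu$-realisability of a formula with free variables means $\mu$-realisability of its universal closure, it suffices to $\mu$-realise this sentence.

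The key observation is that this sentence is, up to logical shuffling, itself a \emph{universal $\Pi_1$} sentence: the matrix is built from $\Delta_0$ formulas using $\rightarrow$ and $\forall$ only, and crucially the induction axiom for $\Delta_0$ $\varphi$ is a \emph{classically true} arithmetical statement all of whose quantifiers, once the bounded quantifiers inside $\varphi$ are left in place and the implications are pushed outward, are universal. More precisely, since $\forall x\,\varphi(x,\bar y)$ with $\varphi$ $\Delta_0$ is universal $\Pi_1$ (as noted in the text just before the corollary), and the antecedent $\varphi(0,\bar y)\wedge\forall x(\varphi(x,\bar y)\to\varphi(x+1,\bar y))$ together with the outer $\forall\bar y$ keeps us inside the class of formulas covered by Lemma~\ref{Lem:CompClassicalTruth2} and Theorem~\ref{Lem:RecClass} — one should check that the proof of Theorem~\ref{Lem:RecClass} for the universal $\Pi_1$ fragment in fact handles implications between $\Delta_0$ (more generally universal $\Pi_1$) formulas, which it does because the case for $\forall$ and the $\Delta_0$ cases are all that is needed and implication between true $\Delta_0$ sentences is $\mu$-realised by Corollary~\ref{Lemma:NEG} when the antecedent is false and by the $\Delta_0$ recursion when it is true. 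Then I would invoke Theorem~\ref{Lem:RecClass}: a universal $\Pi_1$ sentence is $\mu$-realised if and only if it is true, and the $\Delta_0$-induction axiom is true in the standard model, hence $\mu$-realised with probability $1$.

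Alternatively, and perhaps more cleanly, one can avoid re-examining the syntactic class and instead argue directly: the antecedent of the $\Delta_0$-induction instance is itself universal $\Pi_1$ (a conjunction of a $\Delta_0$ sentence with a universal $\Pi_1$ sentence, after universally closing over $\bar y$), so by Theorem~\ref{Lem:RecClass} it is $\mu$-realisable exactly when true; if it is false the whole implication is $\mu$-realised by Corollary~\ref{Lemma:NEG}; if it is true then by classical induction the conclusion $\forall \bar y\forall x\,\varphi(x,\bar y)$ is true, hence universal $\Pi_1$ and true, hence $\mu$-realised by Theorem~\ref{Lem:RecClass}, and one composes the two $\mu$-realisers (using Corollary~\ref{Cor:MURealInductiveProb1} to realise the implication by a program that, on any input, outputs the fixed $\mu$-realiser of the true conclusion). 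Finally, closing under $\mathrm{HA}^-$ by Theorem~\ref{Theorem Heyting Aritmetic} and under intuitionistic deduction by Theorem~\ref{Theorem:Soundness} gives that $\mathrm{HA}^-$ plus $\Delta_0$-induction is $\mu$-realisable.

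The main obstacle I expect is the bookkeeping in the first step: being careful that the $\Delta_0$-induction axiom really does fall, after the harmless rewriting of universal closures and the absorption of bounded quantifiers, within the scope of the ``universal $\Pi_1$'' clause of Theorem~\ref{Lem:RecClass} — in particular that the proof of that theorem's $\Pi_1$ part tolerates the implication connective occurring with $\Delta_0$ (or universal $\Pi_1$) sides, rather than only pure $\forall$-prefixes over $\Delta_0$ matrices. If one is uneasy about stretching the stated class, the direct argument of the previous paragraph sidesteps the issue entirely by treating the antecedent and the (classically derived) conclusion separately and composing realisers, so I would present that version to keep the proof self-contained.
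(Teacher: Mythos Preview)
Your second (``alternative'') argument is essentially the paper's own one-line justification: the conclusion $\forall x\,\varphi(x,\bar y)$ is universal $\Pi_1$, so Theorem~\ref{Lem:RecClass} makes its $\mu$-realisability coincide with its truth, and classical induction supplies the truth whenever the premise is $\mu$-realised. That is the correct route, and it is what the paper does.

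Two small points to tighten. First, the phrase ``after universally closing over $\bar y$'' applied to the \emph{antecedent} is misleading: in $\forall\bar y\,(A(\bar y)\to B(\bar y))$ the antecedent $A(\bar y)$ still has $\bar y$ free, and closing it separately would change the statement. What you actually need is that for each tuple of numerals $\bar m$, the sentence $A(\bar m)$ is (equivalent to) a universal $\Pi_1$ sentence, so that $\mu$-realisability of $A(\bar m)$ implies its truth; this follows from Corollary~\ref{Cor:MURealInductive} and Theorem~\ref{Lem:RecClass} applied to each conjunct. Second, the case split ``if the antecedent is false \dots\ if it is true \dots'' must be packaged into a single program uniform in $\bar y$: the realiser cannot decide which case obtains. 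The fix is the one you hint at---have the program ignore its input $s$ and, for given $\bar m$, simply output $P_\mu$ applied to the realiser produced by Lemma~\ref{Lem:CompClassicalTruth2} for $\forall x\,\varphi(x,\bar m)$. When $A(\bar m)$ has a $\mu$-realiser this output is genuinely a $\mu$-realiser of the conclusion; when $A(\bar m)$ has none the implication clause imposes no condition. Your first approach (forcing the whole axiom into the literal ``universal $\Pi_1$'' class) does not fit the paper's narrow syntactic definition and, as you note, is better avoided.
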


\section{Big Realisability}
\label{Sec:BigReal}

In this section, we will consider other natural definitions of realisability arising from notions of \textit{big sets of oracles} on the real numbers. More specifically, we will consider ``almost sure realisability,'' ``comeagre realisability,'' ``interval-free realisability,'' and ``positive measure realisability.'' It will turn out, however, that the first three are equivalent to standard realisability, while the final one coincides with truth. We begin with the following general definition.


\begin{definition}
    Let $\mathcal{F}$ be a family of subsets of Cantor space $2^\omega$. We then define $\mathcal{F}$-realisability recursively as follows:
    \begin{enumerate} 
        \item $p \Vdash_{\mathcal{F}} \bot$ never,
        \item $p \Vdash_{\mathcal{F}} n=m $ if and only if $n=m$,
        \item $p \Vdash_{\mathcal{F}} \psi_0 \wedge \psi_1$ if and only if $p(i) \Vdash_{\mathcal{F}} \psi_i$ for $i < 2$,
        \item $p \Vdash_{\mathcal{F}} \psi_0 \vee \psi_1$ if and only if there is some $O \in \mathcal{F}$ and some $i < 2$ such that for every $u \in O$, we have $p^u(0) = i$ and $p^u(1) \Vdash_{\mathcal{F}} \psi_i$,
        \item $p \Vdash_{\mathcal{F}} \phi \rightarrow \psi$ if and only if there is a set $O \in \mathcal{F}$, such that for every $u \in O$ and $s \Vdash_{\mathcal{F}} \phi$, we have $p^u(s) \Vdash_{\mathcal{F}} \psi$,
        \item $p \Vdash_{\mathcal{F}} \exists x \phi$ if and only if there is some $O \in \mathcal{F}$, such that there is some $n$ for all $u \in O$ such that $p^u(0) = n$ and $p^u(1) \Vdash \phi(n)$,
        \item $p \Vdash_{\mathcal{F}} \forall x \phi$ if and only if there is a set $O \in \mathcal{F}$, such that for every $u \in O$ and $n\in \mathbb{N}$ we have $p^u(n) \Vdash_{\mathcal{F}} \phi(n)$.
    \end{enumerate}
\end{definition}

From this definition, we derive the following notions of realisability: Let $\mathcal{F}_\mathrm{if}$ be the family of \emph{co-interval-free} subsets of the Cantor space, i.e. $X \in \mathcal{F}_\mathrm{cif}$ if and only if $X \in 2^\omega$ and there is no open interval $I$ such that $I \subseteq 2^\omega \setminus X$, and $\Vdash_\mathrm{cif}$ denotes $\mathcal{F}_\mathrm{cif}$-realisability. 
Let $\mathcal{C}$ be the family of comeagre subsets of the Cantor space, then let $\Vdash_\mathcal{C}$ denote $\mathcal{C}$-realisability. 
Let $\mathcal{F}_{=1}$ be the family of subsets of the Cantor space that are of measure $1$, and let $\Vdash_{=1}$ denote $\mathcal{F}_{=1}$-realisability.
Let $\mathcal{F}_{>0}$ be the family of subsets of the Cantor space of positive measure, and $\Vdash_{>0}$ denotes $\mathcal{F}_{>0}$-realisability.
As before, we will write $\Vdash_\mathcal{F}\varphi$ if and only if there is some realiser $p$ such that $p \Vdash_\mathcal{F} \phi$.

In what follows we will make use of the \emph{bounded exhaustive search with $p(n)$}, i.e. the following procedure. Given a program $p$ (and possibly some input $n$), do the following successively for all $k \in \omega$.
    Enumerate all $0$-$1$-strings of length $k$. For each of these strings $s$, do the following:
    Run $p^{s}(n)$ for $k$ many steps. 
If the computation does not halt within that time (which implies in particular that at most the first $k$ many bits of the oracle were requested), continue with the next $s$ (if there is one, otherwise with $(k+1)$). If the computation halts with output $x$ within that time, then the search terminates with output $x$.

The crucial property of this procedure, which is also contained in the proof idea of Sacks' theorem \cite[Corollary 8.12.2]{downey2010algorithmic}, is the following:

\begin{lemma}{\label{bounded exhaustive search}}
    Let $G \subseteq \omega$, $n\in\omega$ and let $p$ be a program. 
    Suppose that there is a set $S \subseteq 2^\omega$ such that $2^\omega \setminus S$ is interval-free
    and $p^{u}(n)$ terminates for all $u\in S$ with output $k\in G$. Then the bounded exhaustive search with $p(n)$ will terminate with output $k\in G$.
\end{lemma}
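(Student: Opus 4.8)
The plan is to show that the bounded exhaustive search with $p(n)$ terminates, and that whenever it terminates its output lies in $G$. For termination, observe that the set $S$ is nonempty: indeed, $2^\omega\setminus S$ is interval-free, so it cannot contain any basic clopen set $\mathrm{N}_s$, and in particular it is not all of $2^\omega$. Pick any $u\in S$. By hypothesis $p^u(n)$ halts, say in $m$ steps, querying only finitely many bits of the oracle, say the first $\ell$ bits; set $t=u\upharpoonright\ell$ and let $k_0=\max\{m,\ell,|t|\}$. I claim that the search terminates by stage $k_0$ at the latest. Indeed, at stage $k=k_0$ the search enumerates all strings of length $k_0$; when it reaches any extension of $t$ of length $k_0$ (and such a string is among those enumerated, since $|t|=\ell\le k_0$), running $p$ with that oracle-prefix for $k_0\ge m$ steps reproduces the halting computation $p^u(n)$, because that computation only inspected the first $\ell\le k_0$ bits. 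Hence the search halts by stage $k_0$ with \emph{some} output.

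It remains to argue that the output, whichever stage it is produced at, belongs to $G$. Suppose the search terminates at stage $k$ with output $x$, triggered by a string $s$ of length $k$: that is, $p^{s}(n)$ halts within $k$ steps with output $x$, inspecting at most the first $k$ bits of the oracle, which are exactly the bits of $s$. Now the basic clopen set $\mathrm{N}_{s}$ is an open interval of $2^\omega$, so by the interval-freeness of $2^\omega\setminus S$ it is not contained in $2^\omega\setminus S$; hence there exists some $v\in \mathrm{N}_{s}\cap S$. Since $v$ extends $s$, running $p^{v}(n)$ reproduces the very same halting computation as $p^{s}(n)$ (it uses only the first $|s|=k$ bits of the oracle, on which $v$ and $s$ agree, and halts within $k$ steps), so $p^{v}(n)$ halts with output $x$. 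But $v\in S$, so by hypothesis $p^{v}(n)$ halts with output in $G$; therefore $x\in G$, as required.

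The only genuinely delicate point is the interplay between \emph{number of computation steps} and \emph{number of oracle bits queried}: one must use that a computation halting within $k$ steps can request at most $k$ bits of the oracle, so that its behaviour depends only on a length-$k$ prefix, and this is precisely what makes both the string $s$ found by the search and any $v\in\mathrm{N}_s$ yield identical computations. Everything else is the observation that an interval-free set has nonempty intersection with every basic clopen set, which is immediate from the definition of interval-freeness (no open interval, equivalently no basic clopen $\mathrm{N}_s$, is disjoint from $S$).
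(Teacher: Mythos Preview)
Your proof is correct and follows essentially the same approach as the paper: termination via the existence of some $u\in S$ whose halting computation is eventually simulated, and membership of the output in $G$ via the fact that the basic clopen set $\mathrm{N}_s$ corresponding to the triggering string must meet $S$. The only cosmetic differences are that you argue the second part directly where the paper argues by contradiction, and you are more explicit about the step-count/oracle-bit bookkeeping and about why $S$ is nonempty.
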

\begin{proof}
    
    Note that for every $n$ and $u\in S$ we have that $p^u(n)$ terminates with output in $G$. So there is a finite initial segment $s$ of $u$ such that $p^s(n)$ terminates with output $p^u(n)$. So, the bounded exhaustive search will halt. 

    Now, note that if the search halts on the string $s$ with output $k\in\omega$, but $k\notin G$, then $p^{x}(n)\downarrow k$ for all $u\in \mathrm{N}_s$. But then, $\mathrm{N}_s \subseteq 2^\omega\setminus S$ which contradicts the fact that $2^\omega\setminus S$ is interval free.

\end{proof}

\begin{lemma}
    \label{Lemma: meagre and measure 0 contain no intervals}
    Let $X \subseteq 2^\omega$ be a subset of Cantor space. If $\mu(X) = 0$ or $X$ is meagre, then $X$ is interval-free.
\end{lemma}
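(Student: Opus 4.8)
The plan is to argue by contraposition: I would show that if $X$ is \emph{not} interval-free, then $\mu(X)>0$ and $X$ is non-meagre, which immediately gives the contrapositive of both implications at once. So suppose $X$ contains a nonempty open interval. Since every nonempty open subset of $2^\omega$ contains a basic clopen set, we may fix a finite binary string $s$ with $\mathrm{N}_s\subseteq X$.

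From $\mathrm{N}_s\subseteq X$ and monotonicity of the Lebesgue measure we get $\mu(X)\geq\mu(\mathrm{N}_s)=\frac{1}{2^{|s|}}>0$, as recalled in the Preliminaries; hence $\mu(X)\neq 0$. For the category part, recall that $2^\omega$ is a compact (in particular completely metrisable) space, hence a Baire space, so no nonempty open set is meagre. Since $\mathrm{N}_s$ is a nonempty open set contained in $X$, and a subset of a meagre set is meagre, $X$ cannot be meagre. This proves the contrapositive of the statement.

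I do not expect any genuine obstacle here: both halves reduce to the two standard facts that basic clopen sets in $2^\omega$ have positive measure (used throughout the paper) and that $2^\omega$ is a Baire space. The only point worth a sentence is the reduction from an arbitrary ``open interval'' to a basic clopen set $\mathrm{N}_s$, which is just the definition of the product topology on $2^\omega$. I would keep the write-up to three or four lines.
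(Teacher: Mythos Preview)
Your proposal is correct and essentially matches the paper's proof: the paper also handles the measure case by noting that every nonempty open interval has positive measure, and the category case by invoking the Baire Category Theorem (phrased there as ``meagre sets have empty interior''). Your extra step of passing to a basic clopen set $\mathrm{N}_s$ is harmless and the contrapositive framing changes nothing of substance.
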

\begin{proof}
    The first statement follows trivially from the fact that every non-empty open interval has positive measure. For the second statement, recall that meagre sets have empty interiour by the Baire Category Theorem (cf. \cite[Theorem 0.11]{kanamori}) and therefore contain no intervals.
\end{proof}

\begin{theorem}
    \label{Theorem: F-realisability and realisability are the same}
    Let $\mathcal{F}$ be a family of subsets of Cantor space such that every $X \in \mathcal{F}$ is co-interval-free. There are programs $P_\mathcal{F}$ and $P_\mathcal{F}^{-1}$ such that the following holds for all statements $\phi$: (i) if $p \Vdash \phi$, then $P_\mathcal{F}(p,\varphi) \Vdash_\mathcal{F} \phi$, (ii) if $p \Vdash_\mathcal{F} \phi$, then $P_\mathcal{F}^{-1}(p,\varphi) \Vdash \phi$.
    Consequently, $\phi$ is realisable if and only if it is $\mathcal{F}$-realisable, and $\Vdash$, $\Vdash_\mathrm{cif}$, $\Vdash_\mathcal{C}$, and $\Vdash_{=1}$ coincide.
\end{theorem}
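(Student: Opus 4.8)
The plan is to define the two translation programs $P_\mathcal{F}$ and $P_\mathcal{F}^{-1}$ by a single effective recursion on the syntactic shape of $\phi$ (legitimate by the recursion theorem, since both constructions only invoke themselves and each other on proper subformulas and on closed instances $\phi(n)$), and to prove (i) and (ii) by a simultaneous induction on $\phi$. Every program produced by $P_\mathcal{F}$ will be made to \emph{ignore its oracle tape}; the whole content of part (ii) is that such an oracle can always be removed, and the engine for this is the \emph{bounded exhaustive search} together with Lemma \ref{bounded exhaustive search}. We may harmlessly assume $\mathcal{F}\neq\emptyset$ — indeed $2^\omega$ is co-interval-free and lies in each of $\mathcal{F}_\mathrm{cif}$, $\mathcal{C}$ and $\mathcal{F}_{=1}$ — so that some $O\in\mathcal{F}$ is always at hand to witness the ``$\exists O\in\mathcal{F}$'' clauses of $\mathcal{F}$-realisability.

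For direction (i), going from $\Vdash$ to $\Vdash_\mathcal{F}$, the translated realiser mimics the classical one while ignoring the oracle, inserting the translations exactly where the two notions of realiser diverge. On atoms $P_\mathcal{F}$ is the identity; for $\psi_0\wedge\psi_1$ it returns $P_\mathcal{F}(p(i),\psi_i)$ on input $i$; for $\psi_0\vee\psi_1$ it outputs the classical side-choice $p(0)$ on input $0$ and $P_\mathcal{F}(p(1),\psi_{p(0)})$ on input $1$; for $\forall x\,\phi$ it returns $P_\mathcal{F}(p(n),\phi(n))$ on input $n$; for $\exists x\,\phi$ it outputs the classical witness on input $0$ and the translated sub-realiser on input $1$; and for $\phi\rightarrow\psi$ it returns, on input $s$, the program $P_\mathcal{F}\bigl(p\bigl(P_\mathcal{F}^{-1}(s,\phi)\bigr),\psi\bigr)$ — first converting the given $\mathcal{F}$-realiser of $\phi$ to a classical one, feeding that to $p$, and converting back. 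As all these programs are oracle-independent, the defining conditions of $\Vdash_\mathcal{F}$ hold either for every oracle or for none; the induction hypothesis secures the former, and any $O\in\mathcal{F}$ serves as the required witness set. The only clause that genuinely interlocks the two translations is implication, where part (ii) of the induction hypothesis is invoked on $\phi$ to justify the inner $P_\mathcal{F}^{-1}$.

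For direction (ii), going from $\Vdash_\mathcal{F}$ to $\Vdash$, we must eliminate the oracle. Fix $p\Vdash_\mathcal{F}\phi$ and a witnessing $O\in\mathcal{F}$, so that $2^\omega\setminus O$ is interval-free. Atoms and conjunctions require nothing (recurse on $p(i)$); for every other connective the pattern is the same. The datum we need — the disjunct index for $\vee$, the witness $n$ for $\exists$, and in every case the \emph{code} of an $\mathcal{F}$-realiser of the relevant subformula — is, for each $u\in O$, the output of $p^{u}$ on a fixed input. Running the bounded exhaustive search with $p$ on that input therefore terminates, and by Lemma \ref{bounded exhaustive search}, applied with $S=O$ and with $G$ taken to be the set in question (the singleton $\{n\}$, or $\{c : c\Vdash_\mathcal{F}\phi(n)\}$, or $\{c : c\Vdash_\mathcal{F}\psi\}$, and so on), its output lies in $G$. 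The point that makes this work is that $G$ in Lemma \ref{bounded exhaustive search} is allowed to be an arbitrary — in general undecidable — subset of $\omega$: the search never mentions $G$, and interval-freeness of $2^\omega\setminus O$ by itself forces the halting finite-oracle computation it finds to have a ``correct'' output, even though the individual outputs $p^{u}(\cdot)$ over $u\in O$ need not be literally constant. Having recovered such a code $c$, we apply $P_\mathcal{F}^{-1}(c,\cdot)$ to obtain a classical realiser of the subformula and reassemble the classical realiser of $\phi$ (for $\forall x\,\phi$ uniformly in $n$; for $\phi\rightarrow\psi$, given a classical realiser $s$ of $\phi$ we set $s'=P_\mathcal{F}(s,\phi)$, use part (i) of the induction hypothesis and the witnessing $O$ to know $p^{u}(s')\Vdash_\mathcal{F}\psi$ for all $u\in O$, run the search with $p(s')$ to land in $G=\{c:c\Vdash_\mathcal{F}\psi\}$, and output $P_\mathcal{F}^{-1}$ of the result).

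I expect the main obstacle to be bookkeeping rather than ideas: organising the mutual recursion so that all calls are well-founded, and, in the implication and existential clauses of part (ii), getting the instance of Lemma \ref{bounded exhaustive search} exactly right — in particular checking that the value of $p^{u}$ on the chosen input really does land in the intended $G$ for \emph{every} $u\in O$, and that stripping the oracle at this stage does not create a hidden later demand for it. Once (i) and (ii) are in place, the concluding equivalences are immediate: $\mathcal{F}_\mathrm{cif}$ satisfies the hypothesis by definition, while $\mathcal{C}$ and $\mathcal{F}_{=1}$ do so by Lemma \ref{Lemma: meagre and measure 0 contain no intervals} (a comeagre set has meagre complement, a measure-$1$ set has null complement, and both meagre and null sets are interval-free), so $\Vdash$, $\Vdash_\mathrm{cif}$, $\Vdash_\mathcal{C}$ and $\Vdash_{=1}$ all coincide.
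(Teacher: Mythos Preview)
Your proposal is correct and follows essentially the same route as the paper: a simultaneous induction on the build-up of $\phi$, with $P_\mathcal{F}$ producing oracle-ignoring realisers and $P_\mathcal{F}^{-1}$ using the bounded exhaustive search of Lemma~\ref{bounded exhaustive search} to strip the oracle. Your treatment of the implication clause in direction (ii) is in fact more careful than the paper's: the paper simply declares that case ``symmetric by exchanging the roles of $P_\mathcal{F}$ and $P_\mathcal{F}^{-1}$'', but the literal symmetric argument does not go through (since $r^{u}(s)$ depends on $u$), and an explicit bounded exhaustive search with $G=\{c:c\Vdash_\mathcal{F}\psi\}$---exactly as you describe---is what is actually needed. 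Your remark that one must assume $\mathcal{F}\neq\emptyset$ for direction (i) is also a genuine point the paper glosses over.
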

\begin{proof}
    We show both statements by simultaneous induction on the complexity of $\phi$ and simultaneously define $P_\mathcal{F}$ and $P_\mathcal{F}^{-1}$ by recursion on $\phi$.
    
    \bigskip
    
    (1) $\phi$ is $t_0 = t_1$ or $t_0 \neq t_1$. In this case, $\mathcal{F}$-realisers and realisers are the same, so the statement is trivial: $P_\mathcal{F}$ and $P_\mathcal{F}^{-1}$ just return the first component.
    
    \bigskip
    
    (2) $\phi$ is $\psi_0 \wedge \psi_1$. 
    
    Let $r = (r_0,r_1)$ be a realiser for $\phi$ such that $r_i$ realises $\psi_i$ for $i<2$. By induction hypothesis, $P_\mathcal{F}(r_i,\psi_i)$ will return an $\mathcal{F}$-realiser for $\psi_i$. Hence, $P_\mathcal{F}(r,\phi)$ is the program that outputs $P_\mathcal{F}(r_i,\psi_i)$ on input $i$. We obtain $P_\mathcal{F}^{-1}$ in exactly the same way.
    
    \bigskip
    
    (3) $\phi$ is $\psi_0 \vee \psi_1$.
    
    Let $r$ be a realiser for $\phi$, i.e. $r(0)$ returns some $i < 2$ and $r(1) \Vdash \psi_i$. By induction hypothesis, we have that $P_\mathcal{F}(r(1),\psi_i) \Vdash_\mathcal{F} \psi_i$. Hence, $P_\mathcal{F}(r,\phi)$ is the program that returns $i$ on input $0$ and $P_\mathcal{F}(r(1),\psi_i)$ on input $1$.
    
    Conversely, let $r$ be an $\mathcal{F}$-realiser for $\phi$. Then there are some $i < 2$ and $O \in \mathcal{F}$ such that for all $u \in O$, $r^u(0) = i$ and $r^u(1) \Vdash_\mathcal{F} \psi_i$. Hence, let $P_{\mathcal{F}}^{-1}(r, \phi)$ be the program that executes a bounded exhaustive search with $r^u(0)$, which terminates by Lemma \ref{bounded exhaustive search} in some $i < 2$, and then returns $i$ on input 0, and $P_\mathcal{F}^{-1}(r, \psi_i)$ on input 1. Then $P_\mathcal{F}^{-1}(r) \Vdash \phi$.
    
    \bigskip
    
    (4) $\phi$ is $\psi_0 \rightarrow \psi_1$.
    
    Let $r \Vdash \phi$. Then $r$ is a program that, given a realiser $r_0 \Vdash \psi_0$, returns a realiser $r_1 \Vdash \psi_1$. Let $r_0' \Vdash_\mathcal{F} \psi_0$. By induction hypothesis, $P_\mathcal{F}^{-1}(r_0',\psi_0) \Vdash \psi_0$. Hence, $r(P_\mathcal{F}^{-1}(r_0',\psi_0)) \Vdash \psi_1$ and $P_\mathcal{F}(r(P_\mathcal{F}^{-1}(r_0',\psi_0)),\psi_1) \Vdash_\mathcal{F} \psi_1$. Therefore, let $P_\mathcal{F}(r,\phi)$ be the program that takes a realiser $r_0' \Vdash \psi_0$ as input and returns $P_\mathcal{F}(r(P_\mathcal{F}^{-1}(r_0',\psi_0)),\psi_1)$.
    
    The proof for the other direction is symmetric by exchanging the roles of $P_\mathcal{F}$ and $P_\mathcal{F}^{-1}$.
    
    \bigskip 
    
    (5) $\phi$ is $\exists x \psi(x)$. 
    
    Let $r \Vdash \exists x \psi(x)$. Then $r(0) = n$ and $r(1) \Vdash \psi(n)$. By induction hypothesis, it follows that $P_\mathcal{F}(r(1),\psi) \Vdash_\mathcal{F} \psi(n)$. So let $P_\mathcal{F}(r,\phi)$ be the program that output $n$ on input $0$, and $P_\mathcal{F}(r(1),\psi)$ on input $1$. Then, $P_\mathcal{F}(r,\phi) \Vdash_\mathcal{F} \phi$.
    
    Conversely, let $r \Vdash_\mathcal{F} \exists x \psi(x)$. Then there is some $O \in \mathcal{F}$ and $n \in \omega$ such that $r^u(0) = n$ and $p^u(1) \Vdash_\mathcal{F} \psi(n)$. By induction hypothesis, $P_\mathcal{F}^{-1}(p^u(1),\psi) \Vdash \psi(n)$. Define $P_\mathcal{F}^{-1}(r,\phi)$ to be the following program: First, start a bounded exhaustive search with $r(0)$. By Lemma \ref{bounded exhaustive search} this search must terminate with output $n$. Return $n$ on input $0$, and return $P_\mathcal{F}^{-1}(r^u(1),\psi)$ on input $1$. Then $P_\mathcal{F}^{-1}(r,\phi) \Vdash \exists x \psi(x)$.
    
    \bigskip 
    
    (6) $\phi$ is $\forall x \psi(x)$. 
    
    Let $r \Vdash \forall x \psi(x)$. Then $r(n) \Vdash \psi(n)$ for every $n \in \omega$. Let $P_\mathcal{F}(r,\phi)$ be the program that, given $n \in \omega$, returns $P_\mathcal{F}(r(n),\psi)$. With the induction hypothesis, it follows that $P_\mathcal{F}(r,\phi) \Vdash_\mathcal{F} \phi$.
    
    Conversely, let $r \Vdash_\mathcal{F} \exists x \psi(x)$. Then there is some $O \in \mathcal{F}$ such that for every $u \in O$ and $n \in \mathbb{N}$ we have that $r^u(n) \Vdash \psi(n)$. Define $P_\mathcal{F}^{-1}(r,\phi)$ to be the following program: Start a bounded exhaustive search with $r(n)$. By Lemma \ref{bounded exhaustive search}, this search will terminate with $r' \Vdash_\mathcal{F} \psi(n)$. Then return $P_\mathcal{F}^{-1}(r',\psi)$, which, by induction hypothesis, is a realiser of $\psi(n)$. Hence, $P_\mathcal{F}^{-1}(r,\phi) \Vdash \psi$.
\end{proof}

\begin{theorem}\label{Theo: non null and truth}
    Let $\phi$ be a formula. Then $\Vdash_{>0} \phi$ if and only if $\phi$ is true.
\end{theorem}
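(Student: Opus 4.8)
The plan is to prove the two directions separately, and in each case to leverage the fact that a positive-measure set cannot be interval-free only in the degenerate sense that it \emph{is} still a legitimate "big set" — so $\Vdash_{>0}$ is not covered by Theorem \ref{Theorem: F-realisability and realisability are the same}. Instead, the right comparison is with truth, exactly as in Lemma \ref{Lem:Sigma1Real} and Theorem \ref{Lem:RecClass}. For the direction "$\phi$ true $\Rightarrow$ $\Vdash_{>0}\phi$" I would run an induction on the complexity of $\phi$ and build, uniformly, a realiser that ignores the oracle entirely (so the relevant set $O$ can always be taken to be all of $2^\omega$, which has measure $1>0$). Here the base case $t_0=t_1$ is immediate, and the propositional and quantifier cases are handled by the same recursive program constructions already used in Lemma \ref{Lem:CompClassicalTruth} / Corollary \ref{Cor:CompClassicalRealisability}, since those produce oracle-independent realisers; for implication one uses that if $\psi_0$ is not true then (by the induction hypothesis applied contrapositively, together with the reverse direction) $\psi_0$ has no $\mathcal{F}_{>0}$-realiser, so the implication is vacuously $\Vdash_{>0}$-realised, and if $\psi_0$ is true one simply outputs a fixed realiser of $\psi_1$.

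For the converse, "$\Vdash_{>0}\phi \Rightarrow \phi$ true", I would again induct on $\phi$. The key observation is that the clauses of $\mathcal{F}$-realisability, unwound for $\mathcal{F}=\mathcal{F}_{>0}$, give us at each step a \emph{positive-measure} set of oracles on which the expected oracle-realisability behaviour holds, and positive-measure sets are nonempty — so we can always extract a single oracle $u$ witnessing the relevant condition and then feed the induction hypothesis. Concretely: for $\psi_0\wedge\psi_1$, from $p(i)\Vdash_{>0}\psi_i$ we get $\psi_i$ true by IH; for $\psi_0\vee\psi_1$, some $O$ of positive measure and some $i$ work, so picking $u\in O$ gives $p^u(1)\Vdash_{>0}\psi_i$, hence $\psi_i$ true by IH, hence $\psi_0\vee\psi_1$ true; the $\exists$ and $\forall$ cases are analogous; and for $\psi_0\rightarrow\psi_1$, if $\psi_0$ is true then by the \emph{forward} direction (already proved) $\psi_0$ is $\mathcal{F}_{>0}$-realisable, so picking some $u$ in the witnessing set $O$ and some $s\Vdash_{>0}\psi_0$ yields $p^u(s)\Vdash_{>0}\psi_1$, whence $\psi_1$ true by IH, so the implication is true; if $\psi_0$ is not true the implication is true classically for free.

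I expect the main obstacle to be bookkeeping around the implication clause, precisely because the two directions are mutually entangled there: the forward direction's implication case needs the backward direction at lower complexity, and the backward direction's implication case needs the forward direction at lower (indeed equal, for the antecedent) complexity. So I would set this up as a \emph{single simultaneous induction} proving "$\phi$ true $\iff$ $\Vdash_{>0}\phi$" as one statement, rather than two separate inductions, and be careful that in the implication case both sub-invocations are at strictly smaller complexity (antecedent and consequent are both subformulas of $\psi_0\rightarrow\psi_1$), so the induction is well-founded. A secondary point to get right is that "positive measure" is only used through "nonempty": nothing in this proof actually needs measure-theoretic machinery beyond $\mu(X)>0 \Rightarrow X\neq\emptyset$, which is worth remarking since it shows the result would hold for any family $\mathcal{F}$ of nonempty sets that also contains $2^\omega$ — but I would phrase the theorem and proof just for $\mathcal{F}_{>0}$ as stated.
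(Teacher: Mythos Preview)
Your backward direction (``$\Vdash_{>0}\phi \Rightarrow \phi$ true'') is fine and matches the paper: you only use that sets in $\mathcal{F}_{>0}$ are nonempty, pick a witness $u$, and feed the induction hypothesis. The implication case is also handled correctly, with the simultaneous induction you describe.

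The forward direction, however, has a genuine gap. You propose to build realisers that \emph{ignore the oracle entirely}, appealing to the constructions of Lemma~\ref{Lem:CompClassicalTruth} and Corollary~\ref{Cor:CompClassicalRealisability}. Those constructions work only for pretty $\Sigma_1$ formulas, where truth is semi-decidable; for arbitrary $\phi$ they do not apply. The failure becomes visible in the $\forall$-case. If $p$ ignores the oracle, then $p\Vdash_{>0}\forall x\,\psi(x)$ requires a \emph{computable} map $n\mapsto p(n)$ with $p(n)\Vdash_{>0}\psi(n)$ for every $n$. The induction hypothesis only gives you, for each $n$, \emph{some} realiser of $\psi(n)$; it does not give you a uniform one. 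Concretely, take $\psi(n)\equiv H(n)\lor\neg H(n)$ with $H(n)$ expressing ``$p_n(n)$ halts''. An oracle-ignoring realiser $p(n)$ of the disjunction must output a fixed $p(n)(0)\in\{0,1\}$ selecting the true disjunct, so $n\mapsto p(n)(0)$ would decide the halting problem. Hence no oracle-ignoring realiser for $\forall x\,(H(x)\lor\neg H(x))$ exists, yet by the theorem this sentence \emph{is} $\Vdash_{>0}$-realised.

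The paper resolves this by doing the opposite of ignoring the oracle: for each connective it builds a \emph{single} ``reader'' program that extracts the codes of the sub-realisers from a finite initial segment of the oracle. Since any basic clopen set $\mathrm{N}_s$ has positive measure, one may encode the (possibly non-computable) sub-realisers in $s$ and take $O=\mathrm{N}_s$. Crucially, the reader program depends only on the main connective of $\psi$, not on the instance $\psi(n)$, so in the $\forall$-case one can let $p(n)$ be this fixed reader for every $n$. This is the missing idea.

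Your closing generalisation is also incorrect as stated: taking $\mathcal{F}=\{2^\omega\}$ is a family of nonempty sets containing $2^\omega$, but $\Vdash_{\mathcal{F}}$ then collapses to ordinary realisability, which does not coincide with truth. What the paper's argument actually needs is that every basic clopen set belongs to $\mathcal{F}$ (for the forward direction) and that every member of $\mathcal{F}$ is nonempty (for the backward direction).
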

\begin{proof}
The proof is an induction on the complexity of $\varphi$. 

\bigskip

(1) If $\varphi$ is atomic the statement follows by the definitions. 

\bigskip

(2) Assume that $\varphi\equiv \psi_0 \land \psi_1$.\\

If $\phi$ is true then by inductive hypothesis there are $p$ and $q$ such that $p$ $\mathcal{F}_{>0}$-realises $\psi_0$ and $q$ $\mathcal{F}_{>0}$-realises $q$. Let $s$ be a sequence which starts with a code of $p$ followed by a marker and by a code for $q$ followed by a second marker. Then let $t$ be the program that on input $0$ returns the content of the oracle up to the first marker and on input $1$ returns the content of the oracle between the first and the second marker. Note that for all $u\in \mathrm{N}_s$, $r^u(0)\Vdash_{{>0}} \psi_0$ and $r^u(1)\Vdash_{>0}\psi_1$. So, $r\Vdash_{>0}\phi$ as desired.

On the other hand if $\varphi$ is $\mathcal{F}_{>0}$-realised then by definition both $\psi_0$ and $\psi_1$ are $\mathcal{F}_{>0}$-realised and the statement follows by the inductive hypothesis.  

\bigskip

(3) Assume that $\varphi\equiv \psi_0 \lor \psi_1$.\\

If $\phi$ is true then by inductive hypothesis there is $p$ such that $p^u(1)\Vdash_{>0}\psi_{p^u(0)}$ for every $u$ in some positive measure set $O$. Let $s$ be a sequence which starts with $p^u(0)$ followed by  a code for $p^u(1)$ followed by a marker. Then let $q$ be the program that on input $0$ returns the content of the first bit of the oracle and on input $1$ returns the content of the oracle from the second bit to the marker. Note that for all $u\in \mathrm{N}_s$, $q^u(1)\Vdash_{>0}\psi_{q^u(0)}$. So, $q\Vdash_{>0}\phi$ as desired.

On the other hand if $\varphi$ is $\mathcal{F}_{>0}$-realised then by definition at least one between $\psi_0$ and $\psi_1$ is $\mathcal{F}_{>0}$-realised and the statement follows by the inductive hypothesis. 

\bigskip

(4) Assume that $\varphi\equiv \psi_0 \rightarrow \psi_1$. \\

Assume that $\varphi$ is true. Then either $\psi_1$ is true or $\psi_0$ is false. If $\psi_0$ is false then by inductive hypothesis is not $\mathcal{F}_{>0}$-realised and therefore any natural number will $\mathcal{F}_{>0}$-realise $\phi$. If $\psi_1$ is true, then by inductive hypothesis is $\mathcal{F}_{>0}$-realised by some program $p$. Let $s$ be the sequence that starts with a code of $p$ followed by a marker. Let $q$ the program that for every $n$ and every oracle returns the content of the oracle up to the first occurrence of the marker. Then for all $u\in \mathrm{N}_s$ and for every $n$ we have that $q^u(n)\Vdash_{>0} \psi_1$. So, $q\Vdash_{>0}\phi$ as desired.

On the other hand if $\varphi$ is $\mathcal{F}_{>0}$-realised by some program $p$. If $\psi_0$ is true then it is $\mathcal{F}_{>0}$-realised by some program $q$. Then there is a non-null set $O$ such that for all $u\in O$ we have that $p^u(q)\Vdash_{>0}\psi_1$. But then by inductive hypothesis $\psi_1$ must be true.

\bigskip

(5) Assume that $\varphi\equiv \exists{x}\psi$. \\

Assume that $\varphi$ is true. Then for some $n\in\mathbb{N}$ we have that $\psi(n)$ is true. By inductive hypothesis there is $p$ which $\mathcal{F}_{>0}$-realises $\psi(n)$. Let $s$ be a sequence starting with a code for $n$ followed by a marker and then by the code of $p$ followed by a marker. Let $q$ be the program that on input $0$ returns the content of the oracle up to the first marker, and on input $1$ returns the content of the oracle between the first and second marker.  Then for all $u\in \mathrm{N}_s$ and for every $n$ we have that $q^u(1)\Vdash_{>0}\psi(q^u(0))$. So, $q\Vdash_{>0}\phi$ as desired.

On the other hand if $\varphi$ is $\mathcal{F}_{>0}$-realised by some program $p$. Then there is a non-null set $O$ such that for all $u\in O$ we have that $p^u(1)\Vdash_{>0}\psi(p^u(0))$. But then by inductive hypothesis $\psi_1$ must be true.

\bigskip

(6) Assume that $\varphi\equiv \forall{x}\psi$. 

Assume that $\varphi$ is true. Then for all $n\in\mathbb{N}$ we have that $\psi(n)$ is true. Without loss of generality we can assume that the main operator of $\psi$ is not a universal quantifier, the proof can be easily modified otherwise. Let $q$ be the program that ignores the oracle and depending on the main connective of $\psi$ does the following: 
\begin{itemize}
\item if $\psi$ is atomic $q$ is just the constant function $1$;
\item if $\psi$ is $\psi_0\land \psi_1$ then $q(n)$ is the program $r$ from the proof of case (2);
\item if $\psi$ is $\psi_0\lor \psi_1$ then $q(n)$ is the program $q$ from the proof of case (3);
\item if $\psi$ is $\psi_0\rightarrow \psi_1$ then $q(n)$ is the program $q$ from the proof of case (4);
\item if $\psi$ is $\exists{x}\psi_0$ then $q(n)$ is the program $q$ from the proof of case (5);
\end{itemize}
By inductive hypothesis and by (2), (3), (4), and (5) of this proof we have that for all $u\in 2^\omega$ and for every $n$ we have that $q^u(n)\Vdash_{>0}\psi(n)$. So, $q\Vdash_{>0}\phi$ as desired.

On the other hand if $\varphi$ is $\mathcal{F}_{>0}$-realised by some program $p$. Then there is a non-null set $O$ such that for all $u\in O$ we have that $p^u(n)\Vdash_{>0}\psi(n)$. But then by inductive hypothesis $\phi$ must be true.
\end{proof}



%
%
%
\bibliographystyle{splncs04}
\bibliography{REFERENCES}
\end{document}